\documentclass[12pt]{article}

\usepackage{amsmath,enumerate,amsfonts,amssymb,color,graphicx,amsthm}

\pagestyle{myheadings}
\setlength{\textwidth}{15 cm}

\setlength{\textheight}{19.5 cm}

\def\RR{{\mathbb R}}
\def\HH{{\mathbb H}}

\def\Snn{{\mathcal{S}^{n \times n}}}

\newcounter{marnote}

\begin{document}

\newtheorem{thm}{Theorem}[section]
\newtheorem{Def}[thm]{Definition}
\newtheorem{lem}[thm]{Lemma}
\newtheorem{rem}[thm]{Remark}
\newtheorem{question}[thm]{Question}
\newtheorem{prop}[thm]{Proposition}
\newtheorem{cor}[thm]{Corollary}
\newtheorem{example}[thm]{Example}
\newtheorem*{strong}{Strong comparison principle}
\newtheorem{theoremAlph}{Theorem}\renewcommand\thetheoremAlph{\Alph{theoremAlph}}

\title{Comparison principles for some fully nonlinear subelliptic equations on the Heisenberg group}

\author{YanYan Li \footnote{Department of Mathematics, Rutgers University, 110 Frelinghuysen Rd, Piscataway, NJ 08854, USA. Email: yyli@math.rutgers.edu.}~\quad Bo Wang \footnote{
School of Mathematics and Statistics, Beijing Institute of Technology, Beijing 100081, China. Corresponding author. Email: wangbo89630@bit.edu.cn.}}

\date{}

\maketitle

\begin{abstract}
In this paper, we prove a form of the strong comparison principle for a class of fully nonlinear subelliptic operators of the form $\nabla^2_{H,s}\psi + L(\cdot,\psi,\nabla_{H}\psi)$ on the Heisenberg group, which include the CR invariant operators.

Key words: Comparison principle; Subellipticity; CR invariance; Heisenberg group; Propagation of touching points.

MSC2010: 35J60 35J70 35B51 35B65 35D40 53C21 58J70.

\end{abstract}

\setcounter{section}{0}

\section{Introduction}

In this paper, we establish a form of the comparison principle for a class of subelliptic equations on the Heisenberg group. 

Let $\Omega$ be an open connected subset of $\RR^{n}$ ($n\geq1$), the $n$-dimensional Euclidean space. Assume that  $u$, $v\in C^{2}(\Omega)$ satisfy
\begin{equation}\label{2011}
u\geq v,\quad\mbox{in }\Omega.
\end{equation}
The standard form of the strong comparison principle for nonlinear second order elliptic operators $F(x,u,\nabla u,\nabla^{2}u)$ is the following. Here $F(x,s,p,M)$ is of class $C^{1}$, $x\in\Omega$, $s\in\RR^{1}$, $p\in\RR^{n}$, $M\in\mathcal{S}^{n\times n}$, the set of all $n\times n$ real symmetric matrices, and is elliptic, i.e., 
\begin{equation*}
\frac{\partial F}{\partial M_{ij}}>0.
\end{equation*}
\begin{strong}
Let $u$, $v\in C^{2}(\Omega)$ satisfy (\ref{2011}) and 
\begin{equation*}
F(x,u,\nabla u,\nabla^{2}u)\leq F(x,v,\nabla v,\nabla^{2}v),\quad\mbox{in }\Omega.
\end{equation*}
Then we have 
\begin{equation*}
\mbox{either }u>v\mbox{ in }\Omega,\quad\mbox{or }u\equiv v\mbox{ in }\Omega.
\end{equation*}
\end{strong}

In \cite{CLN2009} and \cite{CLN2013}, L. Caffarelli, L. Nirenberg and the first named author obtained some forms of the comparison principle for singular solutions of the nonlinear elliptic operators of the form $F(x,u,\nabla u,\nabla^{2}u)$.

In recent years, comparison principles for degenerate elliptic equations have been widely studied; see \cite{AmendolaGaliseVitolo13-DIE}-\cite{BirindelliDemengel07-CPAA}, \cite{DolcettaVitolo07-MC}-\cite{Trudinger88-RMI} and the references therein. One type of those equations, which appeared in \cite{CLN2009}-\cite{CNS1986}, \cite{HarveyLawsonSurvey2013,HarveyLawson2016}, \cite{Li07-ARMA,Li09-CPAM}, \cite{LN,LW}, involve a symmetric matrix function
\begin{equation}
G[u]:=\nabla^{2}u+L(x,u,\nabla u),\label{1988}
\end{equation}
where $L\in C^{0,1}(\Omega\times\RR\times\RR^{n})$, is in $\mathcal{S}^{n\times n}$.

One such matrix operator is the conformal Hessian matrix operator ( see e.g. \cite{LiLi2003}, \cite{V} and the references therein), i.e.,
\begin{equation*}
H[u]=\nabla^{2}u+\nabla u\otimes \nabla u-\frac{1}{2}|\nabla u|^{2}I_{n},
\end{equation*}
where $I_{n}$ denotes the $n\times n$ identity matrix, and, for $p$, $q\in\RR^{n}$, $p\otimes q$ denotes the $n\times n$ matrix with entries $(p\otimes q)_{ij} =p_{i}q_{j}$ for $i$, $j=1$, $\cdots$, $n$.

Let $U$ be an open subset of $\mathcal{S}^{n\times n}$ satisfying
\begin{equation*}
0\in\partial U,\quad U+\mathcal{P}\subset U,\quad O^{t}UO\subset U,~\forall~O\in O(n)\quad tU\subset U,~\forall~t>0,
\end{equation*}
where $\mathcal{P}$ denotes the set of all $n\times n$ non-negative real matrices and $O(n)$ denotes the set of all $n\times n$ real orthogonal matrices.

In \cite{LN}, it was shown that, under the assumption
\begin{equation}\label{1990}
\mbox{diag}\{1,0,\cdots,0\}\in\partial U,
\end{equation}
the strong comparison principle and Hopf Lemma fail for a class of equations of the form
\begin{equation*}
G[u]\in\partial U.
\end{equation*}
Conversely, if (\ref{1990}) does not hold, then the strong comparison principle and Hopf Lemma holds.

Although the strong comparison principle fails under assumption (\ref{1990}), the first named author proved that a weak form of strong comparison principle holds for the conformal Hessian operator $H[u]$ and locally Lipschitz continuous solutions in \cite{Li09-CPAM}. This comparison principle has played an important role in deriving local gradient estimates and symmetry properties for solutions to (both degenerate and non-degenerate elliptic) equations arising from studies in conformal geometry; see \cite{Li09-CPAM} and the references therein. Later on, in \cite{LiNgWang-arxiv}, this type of comparison principle was generalized to semi-continuous solutions and a larger class of operators $G[u]$ with $L$ of the form
\begin{equation*}
L(x,s,p):=\alpha(x,s)p\otimes p-\beta(x,s)|p|^{2}I_{n},
\end{equation*}
where $\alpha$, $\beta:\Omega\times\RR$ satisfy 
\begin{equation*}
L(x, s, p)\mbox{ is non-decreasing in }s,
\end{equation*}
and 
\begin{equation*}
\begin{array}{ll}
& \text{either $|\beta(x, s)|> \beta_0 > 0$ for some constant $\beta_0$},\\
& \text{or both $\alpha$ and $\beta$ are constant}.
\end{array}
\end{equation*}

By taking $\alpha\equiv1$ and $\beta\equiv1$, operator $G[u]$ becomes $H[u]$. The comparison principle was applied in \cite{LiNgWang-arxiv} to obtain the local Lipschitz regularity of viscosity solutions of fully nonlinear degenerate conformally invariant equations. Since this type of comparison principle also includes the equations arising from fully nonlinear Yamabe problem of ''negative type'', as  another application, it was recently applied to a fully nonlinear version of the Loewner-Nirenberg problem in \cite{GLN2018}. 

In \cite{LiMonticelli-JDE}, the first named author and D. D. Monticelli considered the comparison principle for solutions of second order fully nonlinear CR invariant equations. Let $\Omega$ be an open bounded subset of $\mathbb{H}^{n}$, the $n$-dimensional Heisenberg group. For any $C^2$ positive function $u$ in $\Omega$, it was proved in \cite{LiMonticelli-JDE} that a second order fully nonlinear operator is CR invariant if and only it has the form 
\begin{align*}
A^{u}&:=-\frac{2}{Q-2}u^{-\frac{Q+2}{Q-2}}\nabla^{2}_{H,s}u+\frac{2Q}{(Q-2)^{2}}u^{-\frac{2Q}{Q-2}}\nabla_{H}u\otimes \nabla_{H}u\\
&\quad\quad\quad\quad-\frac{4}{(Q-2)^{2}}u^{-\frac{2Q}{Q-2}}J\nabla_{H}u\otimes J\nabla_{H}u-\frac{2}{(Q-2)^{2}}u^{-\frac{2Q}{Q-2}}|\nabla_{H}u|^{2}I_{2n},
\end{align*}
where $Q=2n+2$ denote the homogenous dimension of $\mathbb{H}^{n}$, $\nabla^{2}_{H,s}u$ and $\nabla_{H}u$ denote the symmetrized Heisenberg Hessian matrix and Heisenberg gradient of $u$, respectively (see Subsection 2.1), and \[
J:=\left(
\begin{array}{cc}
  0_{n}&  I_{n}  \\
 -I_{n}& 0_{n} 
\end{array}
\right).
\]
For geometric aspect related to the CR invariant operators, we refer to \cite{BCM12} and \cite{MM}.

It was proved in \cite{LiMonticelli-JDE} the comparison principle for the equations of the form 
\begin{equation*}
A^{u}\in\partial\Sigma,\quad\mbox{in }\Omega,
\end{equation*}
where $\Sigma$ is a non-empty open subset of $\mathcal{S}^{n\times n}$, satisfying a degenerate ellipticity condition:
\begin{equation}
\text{ if }A\in \Sigma, B\in\mathcal{S}^{2n\times 2n}\mbox{ and }B>0, \text{ then } A+B\in \Sigma.
	\label{Eq:UCondPos}
\end{equation}
(Note that \eqref{Eq:UCondPos} implies that $\partial \Sigma$ is Lipschitz.) and a homogeneity condition:
\begin{equation}
A \in \Sigma \text{ and } c > 0 \Rightarrow cA \in \Sigma.
	\label{Eq:UCone}
\end{equation}

\begin{theoremAlph}(\cite{LiMonticelli-JDE})\label{LM}
Let $\Sigma$ satisfy (\ref{Eq:UCondPos}) and (\ref{Eq:UCone}). Assume that $u$, $v\in C^{2}(\Omega)\cap C^{0}(\overline{\Omega})$, $u$, $v>0$ in $\Omega$ and satisfy 
\begin{equation*}
A^{u}\in\overline{\Sigma},\quad A^{v}\in \mathcal{S}^{2n\times 2n}\setminus \Sigma,\quad\mbox{in }\Omega.
\end{equation*}
Then 

\begin{enumerate}[(a)]
\item If $u \geq v$ on $\partial\Omega$, then $u\geq v$ in $\Omega$.

\item If $u > v$ on $\partial\Omega$, then $u> v$ in $\Omega$.
\end{enumerate}

\end{theoremAlph}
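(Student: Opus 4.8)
The plan is to derive (b) from (a) by rescaling, and to prove (a) by contradiction: using the homogeneity of the operator one rescales $v$ so that it touches $u$ from below at an interior point, analyses that contact, and then propagates it to the boundary to reach a contradiction.

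\textbf{Reduction of (b) to (a).} A direct computation from the displayed formula for $A^{(\cdot)}$ gives $A^{cw}=c^{-4/(Q-2)}A^{w}$ for every constant $c>0$; together with (\ref{Eq:UCone}) this shows that both $\Sigma$ and $\mathcal S^{2n\times 2n}\setminus\Sigma$ are invariant under multiplication by positive scalars, so $A^{tv}=t^{-4/(Q-2)}A^{v}\in\mathcal S^{2n\times 2n}\setminus\Sigma$ for all $t>0$. If $u>v$ on the compact set $\partial\Omega$ then $\min_{\partial\Omega}(u-v)>0$, hence $u\ge tv$ on $\partial\Omega$ for $t>1$ sufficiently close to $1$. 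Applying (a) to the pair $(u,tv)$ yields $u\ge tv$ in $\Omega$, and since $v>0$ in $\Omega$ this gives $u>v$ in $\Omega$, which is (b).

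\textbf{Proof of (a).} Suppose $u\ge v$ on $\partial\Omega$ but $\{u<v\}\ne\varnothing$. Put $t^{*}:=\inf_{\Omega}(u/v)\in[0,1)$. One first checks, using $u\ge v$ on $\partial\Omega$, boundedness of $v$, and the behaviour of $A^{(\cdot)}$ near $\partial\Omega$ (a Hopf-type barrier is needed only at common boundary zeros of $u$ and $v$), that $t^{*}>0$ and that the infimum is attained at some $x_{0}\in\Omega$. Set $\tilde v:=t^{*}v$, so that $u\ge\tilde v$ in $\Omega$, $u(x_{0})=\tilde v(x_{0})$, $A^{u}\in\overline\Sigma$, and $A^{\tilde v}=(t^{*})^{-4/(Q-2)}A^{v}\in\mathcal S^{2n\times 2n}\setminus\Sigma$. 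At the interior minimum $x_{0}$ of $u-\tilde v\ge 0$ one has $\nabla(u-\tilde v)(x_{0})=0$ and $\nabla^{2}(u-\tilde v)(x_{0})\ge 0$; in particular $\nabla_{H}u(x_{0})=\nabla_{H}\tilde v(x_{0})$, and, because the symmetrized Heisenberg Hessian at a critical point is the pull-back of the Euclidean Hessian by the horizontal frame, $B_{0}:=\nabla^{2}_{H,s}(u-\tilde v)(x_{0})\ge 0$. Since $u(x_{0})=\tilde v(x_{0})$ and their horizontal gradients coincide, all the zeroth- and first-order terms in $A^{u}(x_{0})$ and $A^{\tilde v}(x_{0})$ cancel, leaving
\[
A^{\tilde v}(x_{0})-A^{u}(x_{0})=\frac{2}{Q-2}\,u(x_{0})^{-\frac{Q+2}{Q-2}}\,B_{0}\ \ge\ 0 .
\]
By (\ref{Eq:UCondPos}), $\overline\Sigma$ is stable under adding a nonnegative symmetric matrix, so $A^{\tilde v}(x_{0})\in\overline\Sigma$; combined with $A^{\tilde v}(x_{0})\notin\Sigma$ this forces $A^{\tilde v}(x_{0})\in\partial\Sigma$, and — since adding a positive definite matrix to an element of $\overline\Sigma$ lands in $\Sigma$ — $B_{0}$ cannot be positive definite: the contact is degenerate in at least one horizontal direction.

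\textbf{Closing the argument, and the main obstacle.} It remains to rule out this degenerate contact, and this is the heart of the matter. I would argue that the contact set $\mathcal Z:=\{x\in\Omega:u(x)=\tilde v(x)\}$, which is nonempty and relatively closed in $\Omega$, is also open, so that connectedness of $\Omega$ forces $\mathcal Z=\Omega$; then $u\equiv t^{*}v$ in $\Omega$, hence $u=t^{*}v$ on $\partial\Omega$, which together with $u\ge v$, $v>0$ and $t^{*}<1$ is a contradiction. Openness of $\mathcal Z$ is a propagation statement: at any $x_{1}\in\mathcal Z$ the function $f:=u-\tilde v\ge 0$ vanishes to first order, and, because $A^{u}$ and $A^{\tilde v}$ share the principal symbol $-\tfrac{2}{Q-2}(\cdot)^{-(Q+2)/(Q-2)}\nabla^{2}_{H,s}(\cdot)$, one must convert the matrix inclusions $A^{u}\in\overline\Sigma$, $A^{\tilde v}\notin\Sigma$ into a scalar subelliptic differential inequality for $f$ exhibiting it as a nonnegative (super)solution of a linear operator of Hörmander type built from the horizontal vector fields plus lower-order terms; the corresponding Bony-type strong maximum/minimum principle then yields $f\equiv 0$ near $x_{1}$, precisely because the horizontal fields together with their commutators span the tangent space of $\HH^{n}$, so the zero of $f$ cannot be isolated in any direction. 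Extracting that scalar inequality cleanly — $\partial\Sigma$ is only Lipschitz, and $A^{\tilde v}$ is only known to avoid $\Sigma$, not $\overline\Sigma$ — together with the boundary-barrier estimate needed to place $x_{0}$ in the interior, is where the genuine difficulty lies; CR invariance enters through the explicit Heisenberg "bubble" solutions of $A^{(\cdot)}\in\partial\Sigma$ which, after a left translation and a dilation centred at a candidate contact or boundary point, furnish the comparison barriers used in both steps.
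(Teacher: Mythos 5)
Your reduction of (b) to (a) via the homogeneity $A^{cw}=c^{-4/(Q-2)}A^{w}$ and \eqref{Eq:UCone} is correct and standard. The proof of (a), however, has a genuine gap at its core. The step you yourself flag as ``the heart of the matter'' --- converting $A^{u}\in\overline\Sigma$, $A^{\tilde v}\in\mathcal{S}^{2n\times 2n}\setminus\Sigma$ into a scalar H\"ormander-type inequality for $f=u-\tilde v$ and invoking a Bony-type strong maximum principle to make the contact set open --- is not just technically delicate, it is unavailable under the hypotheses. Condition \eqref{Eq:UCondPos} is purely qualitative degenerate ellipticity: $\Sigma$ may be so degenerate that the strong maximum principle and the Hopf lemma genuinely fail (the paper recalls exactly this phenomenon from \cite{LN}, and stresses that $w\geq v$ does not imply the dichotomy $w>v$ or $w\equiv v$ for such operators). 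From the two matrix inclusions one gets precisely what you derived at a single touching point --- $\nabla^{2}_{H,s}(u-\tilde v)(x_{0})\geq 0$ but not positive definite --- and no linear subelliptic operator with any quantitative nondegeneracy can be extracted to propagate the contact. The same criticism applies to the preliminary step: the positivity of $t^{*}=\inf_{\Omega}(u/v)$ and interior attainment at common boundary zeros of $u$ and $v$ rest on a ``Hopf-type barrier'' that is not justified (and cannot be, in this generality). Finally, even granting $\mathcal{Z}=\Omega$, the closing contradiction fails when $u=v=0$ on all of $\partial\Omega$: then $u\equiv t^{*}v$ is compatible with $u\geq v$ on $\partial\Omega$ and with $A^{u}=(t^{*})^{-4/(Q-2)}A^{v}\in\partial\Sigma$, so no contradiction results without an additional nonexistence argument.

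The statement is quoted from \cite{LiMonticelli-JDE} and the present paper does not reprove it; but both that proof and the paper's generalization (Theorem \ref{thm:CPQuad} via Theorem \ref{thm:CPNUE}) proceed by a different mechanism, which is the one you should adopt: pass to $\psi=-\frac{2}{Q-2}\log u$ so that $A^{u}=e^{2\psi}A[\psi]$, and instead of trying to propagate a touching set, perturb the subsolution by $\mu\,(e^{\alpha|z|^{2}}+e^{-\beta\psi}-\tau)$ (the first-variation estimate of Lemma \ref{Lem:FVSub}) to produce a \emph{strict} gain of the form $\mu K_{0}[(1+|\nabla_{H}\psi|^{m})I_{2n}+\nabla_{H}\psi\otimes\nabla_{H}\psi]$, keep the maximum point of the perturbed difference in the interior by using the boundary inequality, and then the second-order test at that single interior point contradicts \eqref{Eq:UCondPos} together with the cone property --- no strong maximum principle, no Hopf lemma, and no openness of the contact set are ever needed. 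Your pointwise analysis at $x_{0}$ (coincidence of horizontal gradients, ordering of symmetrized horizontal Hessians, cancellation of lower-order terms) is correct and would be the right local computation inside that perturbation argument, but as it stands your proposal is missing the idea that actually closes the proof.
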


The main goal of our paper is to generalize Theorem \ref{LM} to semi-continuous viscosity solutions and to more general fully nonlinear subelliptic operators.

For any $C^2$ function $\psi$ in $\Omega$, we consider a symmetric matrix function
\begin{equation}
F[\psi] := \nabla^2_{H,s} \psi +L(\cdot,\psi,\nabla_{H}\psi),
	\label{Eq:FConstab}
\end{equation}
where $L\in C^{0,1}_{loc}(\Omega\times\RR\times\RR^{2n})$ is in $\mathcal{S}^{2n\times 2n}$, and is of the form 
\begin{equation}
L(\cdot,s,p) = \alpha(\cdot, s)\,p\otimes p -\gamma(\cdot, s)\, J  p \otimes J p - \beta(\cdot, s) |p|^2\,I_{2n}
	\label{Eq:FIntroDef}
\end{equation}
where $\alpha$, $\beta$, $\gamma\in C^{0,1}_{loc}(\Omega\times\RR)$. If $\alpha$, $\gamma\equiv1$ and $\beta\equiv\frac{1}{2}$, operator $F[\psi]$ becomes the operator
\begin{equation*}
A[\psi]=\nabla^2_{H,s} \psi+\nabla_{H}\psi \otimes \nabla_{H}\psi-J  \nabla_{H}\psi \otimes J  \nabla_{H}\psi - \frac{1}{2} |\nabla_{H} \psi|^{2}I_{2n}\end{equation*}
By letting $u = e^{-\frac{Q-2}{2}\psi}$, it is easy to see that $A^u=e^{2\psi} A[\psi]$.

We consider the equation 
\[
F[\psi] \in \partial \Sigma.
\]

For any set $S \subset\mathbb{H}^{n}$, we use $\mbox{USC}(S)$ to denote the set of functions $\psi:S\rightarrow\mathbb{R}\cup\{-\infty\}$, $\psi \not\equiv -\infty$ in $S$, satisfying 
\begin{equation*}
\limsup\limits_{\xi\rightarrow\bar{\xi}}\psi(\xi)\leq \psi(\bar{\xi}),\quad \forall \bar{\xi}\in S.
\end{equation*}
Similarly, we use $\mbox{LSC}(S)$ to denote the set of functions $\psi: S\rightarrow\mathbb{R}\cup\{+\infty\}$, $\psi \not\equiv +\infty$  in $S$, satisfying 
\begin{equation*}
\liminf\limits_{\xi\rightarrow\bar{\xi}}\psi(\xi)\geq \psi(\bar{\xi}),\quad \forall \bar{\xi}\in S.
\end{equation*}

We now give the definition of viscosity subsolutions, supersolutions and solutions to the subelliptic equation $F[\psi] \in \partial \Sigma$.

\begin{Def}\label{Def:ViscositySolution}
Let $\Omega\subset\mathbb{H}^{n}$, $n \geq 1$, be an open set, and $\Sigma$ be a non-empty open 
subset of $\mathcal{S}^{2n\times 2n}$ satisfying \eqref{Eq:UCondPos}. For a function  $\psi$ in $USC(\Omega)$ ($LSC(\Omega)$), we say that 
\begin{equation*}
F[\psi] \in \overline{\Sigma}\quad \left(F[\psi]\in\Snn\setminus \Sigma\right) \quad\mbox{in }\Omega \quad\mbox{in the viscosity sense}
\end{equation*}
if for any $\xi_{0}\in\Omega$, $\varphi\in C^{2}(\Omega)$, $(\psi-\varphi)(\xi_{0})=0$ and 
\begin{equation*}
\psi-\varphi\leq0\quad(\psi-\varphi\geq0),\quad\mbox{near }\xi_{0},
\end{equation*}
there holds
\begin{equation*}
F[\varphi](\xi_{0})\in \overline{\Sigma}\quad \left(F[\varphi](\xi_{0})\in\mathcal{S}^{2n\times 2n}\setminus \Sigma\right).
\end{equation*}

We say that a function $\psi \in C^0(\Omega)$ satisfies 
\begin{equation}
F[\psi]\in \partial \Sigma \text{ in the viscosity sense}
	\label{Eq:FpsiEq}
\end{equation}
in $\Omega$ if $F[\psi]$ belongs to both $\overline{\Sigma}$ and $\mathcal{S}^{2n\times 2n}\setminus \Sigma$ in $\Omega$ in the viscosity sense.
 
 When $F[\psi] \in \overline{\Sigma}\quad \left(F[\psi]\in\mathcal{S}^{2n\times 2n}\setminus \Sigma\right)$ in $\Omega$ in the viscosity sense, we also say interchangeably that $\psi$ is a viscosity subsolution (supersolution) to \eqref{Eq:FpsiEq} in $\Omega$.
\end{Def}

In the sequel, we say that \emph{the principle of propagation of touching points} holds for $(F,\Sigma)$ if for any supersolution $w \in LSC(\bar\Omega)$ and subsolution $v\in USC(\bar\Omega)$ of \eqref{Eq:FpsiEq} in $\Omega$ one has
\[
w\geq v \mbox{ in } \Omega \text{ and }  w > v \text{ on } \partial\Omega \qquad\Rightarrow \qquad w > v \text{ in } \Omega.
\]
(In other words, if $w \geq v$ in $\Omega$ then every non-empty connected component of the set $\{\xi \in \bar\Omega: w(\xi) = v(\xi)\}$ contains a point of $\partial\Omega$.) This principle can be viewed as a weak version of the strong comparison principle.

We say that \emph{the comparison principle} holds for $(F,\Sigma)$ if for any supersolution $w \in LSC(\bar\Omega)$ and subsolution $v\in USC(\bar\Omega)$ of \eqref{Eq:FpsiEq} in $\Omega$ one has
\[
w \geq v \text{ on } \partial\Omega \qquad \Rightarrow \qquad w \geq v \text{ in }  \Omega.
\]
It should be noted that, for general degenerate elliptic equations, $w \geq v$ in $\Omega$ does not imply the dichotomy that $w > v$ or $w \equiv v$ in $\Omega$. (This is in contrast with the uniformly elliptic case.)

\begin{rem}\label{rem:SCP=>CP}
If $L(\xi,s,p)$ is independent of $s$, then the principle of propagation of touching points is equivalent to the comparison principle. 
\end{rem}

We prove that the principle of propagation of touching points holds under the following structural conditions:

\begin{equation}
\text{$L(\xi,s,p)$ is non-decreasing in $s$.}
	\label{Eq:LMonotone}
\end{equation}
and 
\begin{equation}
\begin{array}{ll}
& \text{$\beta(\xi, s)> \beta_0 > 0$ for some constant $\beta_0$},\gamma(\xi,s)\geq0,\\
\text{ or } &  \text{$\beta(\xi, s)<- \beta_0 < 0$ for some constant $\beta_0$},\gamma(\xi,s)\leq0,\\
\text{ or } & \text{ $\alpha$ and $\beta$ are constant, $\gamma\equiv0$}.
\end{array}
	\label{Eq:betaStruct}
\end{equation}
Note that the conditions (\ref{Eq:LMonotone}) and (\ref{Eq:betaStruct}) are consistent with $A[\psi]$ defined as above.

\begin{thm}[Principle of propagation of touching points]\label{thm:CPQuad}
Let $F$ be of the form \eqref{Eq:FIntroDef} with $\alpha, \beta, \gamma\in C^{0,1}_{loc}(\bar\Omega \times \RR)$ satisfying \eqref{Eq:LMonotone} and \eqref{Eq:betaStruct}. Let $\Omega\subset\mathbb{H}^{n}$ ($n \geq 1$) be a bounded open set, and $\Sigma$ be a non-empty open subset of $\mathcal{S}^{2n \times 2n}$ satisfying \eqref{Eq:UCondPos} and \eqref{Eq:UCone}. Assume that $w \in LSC(\bar\Omega)$ and $v\in USC(\bar\Omega)$ are respectively a supersolution and a subsolution of \eqref{Eq:FpsiEq} in $\Omega$.

\begin{enumerate}[(a)]
\item If $w \geq v$ in $\Omega$ and $w > v$ on $\partial\Omega$, then $w > v$ in $\Omega$.

\item In case $\alpha$, $\beta$ and $\gamma$ are constant, if $w \geq v$ on $\partial\Omega$, then $w \geq v$ in $\Omega$.
\end{enumerate}
\end{thm}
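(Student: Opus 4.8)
The plan is to reduce part (b) to part (a) and to concentrate the work on (a). For (b), note that when $\alpha,\beta,\gamma$ are constants the nonlinearity $L$ in \eqref{Eq:FIntroDef} is independent of $(\xi,s)$, so if $v$ is a subsolution then so is $v-\delta$ for every constant $\delta\in\RR$. Suppose $w\geq v$ on $\partial\Omega$ but $w\not\geq v$ somewhere in $\Omega$; since $w-v\in LSC(\bar\Omega)$ on the compact set $\bar\Omega$, the number $\delta:=\max_{\bar\Omega}(v-w)>0$ is attained at an interior point. Then $w$ and $v-\delta$ are respectively a supersolution and a subsolution of \eqref{Eq:FpsiEq} with $w\geq v-\delta$ in $\Omega$ and $w>v-\delta$ on $\partial\Omega$ (as $w\geq v$ there and $\delta>0$); applying (a) to the pair $(w,v-\delta)$ yields $w>v-\delta$ in $\Omega$, contradicting the interior maximum. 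This is exactly the mechanism behind Remark \ref{rem:SCP=>CP}.

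For (a) I would argue by contradiction. Assume the touching set $Z:=\{\xi\in\Omega:\,w(\xi)=v(\xi)\}$ is nonempty. Since $z:=w-v\in LSC(\bar\Omega)$, $z\geq 0$ in $\bar\Omega$, and $z>0$ on the compact set $\partial\Omega$ forces $\inf_{\partial\Omega}z>0$, the set $Z$ is a nonempty \emph{compact} subset of $\Omega$, and $Z\neq\Omega$. I would then perform the usual sliding–ball localization, now in the Korányi (or Carnot--Carathéodory) geometry of $\mathbb{H}^n$: starting from a point where $z>0$ and enlarging gauge balls, produce a closed ball $\bar B=\bar B_r(\eta)\subset\Omega$ with $z>0$ in $B$ and a boundary point $\xi_0\in\partial B\cap Z$. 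After a left translation we may take $\xi_0$ to be the origin; and because $v\leq w$, the monotonicity hypothesis \eqref{Eq:LMonotone} lets us replace $L(\cdot,v,\cdot)$ by $L(\cdot,w,\cdot)$ when we compare the two equations $F[w]\in\mathcal S^{2n\times2n}\setminus\Sigma$ and $F[v]\in\overline\Sigma$, so that only the dependence of $L$ on the gradient remains to be dealt with.

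The heart of the matter is a subelliptic Hopf / strong–maximum–principle statement at $\xi_0$. Using the doubling–of–variables technique together with sup–convolution of $v$ and inf–convolution of $w$ to handle the mere semicontinuity, and exploiting the quadratic form \eqref{Eq:FIntroDef} of $L$, I would show that near $\xi_0$ the function $z=w-v$ is a viscosity supersolution of a linear subelliptic inequality $\sum_{i,j}a^{ij}X_iX_jz+\sum_i b^iX_iz\leq 0$, where $X_1,\dots,X_n,Y_1,\dots,Y_n$ are the horizontal fields of $\mathbb{H}^n$, $b^i\in L^\infty$, and $(a^{ij})$ is symmetric and nonnegative. Here the structural condition \eqref{Eq:betaStruct} — $\beta$ bounded away from $0$ with $\gamma$ of the compatible sign, or $\alpha,\beta$ constant and $\gamma\equiv0$ — is precisely what is needed to absorb the $\alpha\,p\otimes p$ and $\gamma\,Jp\otimes Jp$ contributions and to guarantee that the kernel of $(a^{ij})$ is at most one–dimensional; hence the directions on which $(a^{ij})$ is positive, together with their iterated Lie brackets, span a distribution of codimension at most one in $\mathbb{H}^n$. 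Bony's propagation–of–maxima theorem for such Hörmander–type operators then forces $z$ to vanish along sub–unit curves issuing from $\xi_0$; choosing the sliding ball (equivalently, the approach direction) so that its outer gauge–normal at $\xi_0$ is transverse to the degenerate direction, such a curve enters $B$, contradicting $z>0$ in $B$ (equivalently, the closed zero set of $z$ contains a connected leaf through $\xi_0$ that cannot be compactly contained in $\Omega$, hence meets $\partial\Omega$, contradicting $w>v$ there). I expect the main obstacle to be exactly this step: deriving the linear subelliptic differential inequality for $z$, with a controlled (at most one–dimensional) degeneracy, for \emph{semicontinuous} viscosity solutions and in the \emph{subelliptic} Heisenberg setting, where the Euclidean Hopf lemma and the uniformly elliptic strong maximum principle are unavailable and must be replaced by Bony's results under Hörmander's condition — and, in tandem, checking that the barriers one adds to $v$ or subtracts from $w$ remain viscosity sub/supersolutions of $F[\cdot]\in\partial\Sigma$, which is where the quadratic structure of $L$, the sign conditions \eqref{Eq:betaStruct} and \eqref{Eq:LMonotone}, and the cone/monotonicity conditions \eqref{Eq:UCondPos}--\eqref{Eq:UCone} on $\Sigma$ all have to be used simultaneously.
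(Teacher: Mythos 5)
Your reduction of part (b) to part (a) by subtracting the constant $\delta=\max_{\bar\Omega}(v-w)$ is correct and is exactly the mechanism of Remark \ref{rem:SCP=>CP}, which is how the paper itself obtains (b). The problem is with your plan for part (a), which is where all the work lies.

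The central step you propose --- showing that $z=w-v$ is a viscosity supersolution of a linear subelliptic inequality $\sum a^{ij}X_iX_jz+\sum b^iX_iz\le 0$ with $(a^{ij})\ge 0$ having kernel of dimension at most one, and then invoking Bony-type propagation together with a sliding gauge ball --- has no justification and cannot work in the stated generality. The degeneracy of the problem is dictated by $\Sigma$, not by $L$: the only hypotheses on $\Sigma$ are \eqref{Eq:UCondPos} and \eqref{Eq:UCone}, so $\partial\Sigma$ is merely Lipschitz and may contain matrices of very low rank (e.g.\ of the type $\diag\{1,0,\dots,0\}$). Any ``linearization'' of the two inclusions $F[w]\in\mathcal S^{2n\times 2n}\setminus\Sigma$, $F[v]\in\overline\Sigma$ produces a coefficient matrix whose positivity directions depend on the unknown solutions and can be of rank one; the condition \eqref{Eq:betaStruct} constrains only the gradient part $L$ and gives no control whatsoever on this degeneracy, so the claimed ``kernel at most one-dimensional / H\"ormander'' structure is simply not available. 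More decisively: if your sliding-ball argument worked, it would prove the strong comparison principle (openness and closedness of $\{w=v\}$ in $\Omega$, hence the dichotomy $w>v$ or $w\equiv v$), and the paper explicitly warns that this dichotomy and the Hopf lemma \emph{fail} for this class of degenerate inclusions (see the discussion of \cite{LN} in the introduction; the case $\alpha,\beta,\gamma$ constant with $\beta=\gamma=0$, allowed by \eqref{Eq:betaStruct}, already contains such counterexamples). So the local Hopf/Bony route is not merely incomplete, it proves too much. There is also a secondary gap: passing from the semicontinuous pair $(w,v)$ to a differential inequality for $z$ by ``doubling of variables plus sup/inf convolution'' is asserted but not carried out, and it is precisely the delicate point.

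For contrast, the paper's proof of (a) never linearizes and is global rather than local at a touching point: it regularizes by $\epsilon$-upper/lower envelopes with a quantitative error estimate (Proposition \ref{key lemma}), perturbs $v^\epsilon$ to $v^\epsilon+\mu(e^{\alpha|z|^2}+e^{-\beta v^\epsilon}-\tau)$ so that, thanks to \eqref{Eq:betaStruct} (via condition \eqref{Eq:Lcond2}), $F$ of the perturbed function dominates a positive multiple of $F[v^\epsilon]$ plus a strictly positive definite excess $\frac{\mu}{C}(1+|\nabla_H v^\epsilon|^m)I_{2n}$ (Lemma \ref{Lem:FVSub}), then uses Jensen's lemma / the concave-envelope measure argument to find points where both envelopes are punctually second order differentiable and the Hessian of the difference is nonpositive, and finally plays the two matrix inclusions against each other using \eqref{Eq:UCondPos} and the cone property \eqref{Eq:UCond*S}. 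The case $\beta\equiv\gamma\equiv 0$, $\alpha$ constant is handled separately by the substitution $\tilde w=\frac{\alpha}{|\alpha|}e^{\alpha w}$, reducing to $F[\psi]=\nabla^2_{H,s}\psi$. Your proposal would need to be rebuilt along these (or comparable global, perturbation-based) lines.
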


As a corollary of Theorem \ref{thm:CPQuad}, we have 

\begin{thm}[Uniqueness for the Dirichlet Problem]
Let $\Omega\subset\mathbb{H}^{n}$ ($n \geq 1$) be a bounded open set, and $\Sigma$ be a non-empty open subset of $\mathcal{S}^{2n\times 2n}$ satisfying \eqref{Eq:UCondPos} and \eqref{Eq:UCone}. Assume that $F$ is of the form \eqref{Eq:FConstab} with constants $\alpha$, $\beta$, $\gamma$ satisfying (\ref{Eq:betaStruct}). Then, for any $\varphi \in C^0(\partial\Omega)$, there exists at most one solution $\psi \in C^{0}(\bar\Omega)$ of \eqref{Eq:FpsiEq} satisfying $\psi = \varphi$ on $\partial\Omega$.
\label{thm:Uniq}
\end{thm}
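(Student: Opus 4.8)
The plan is to derive this as a direct consequence of Theorem \ref{thm:CPQuad}(b). Suppose $\psi_1, \psi_2 \in C^0(\bar\Omega)$ both solve \eqref{Eq:FpsiEq} in $\Omega$ with $\psi_1 = \psi_2 = \varphi$ on $\partial\Omega$. By Definition \ref{Def:ViscositySolution}, each $\psi_i$ is simultaneously a viscosity subsolution and a viscosity supersolution of \eqref{Eq:FpsiEq} in $\Omega$; in particular $\psi_1 \in C^0(\bar\Omega) \subset \mathrm{USC}(\bar\Omega)$ is a subsolution and $\psi_2 \in C^0(\bar\Omega) \subset \mathrm{LSC}(\bar\Omega)$ is a supersolution, with $\psi_2 = \psi_1$ on $\partial\Omega$, hence $\psi_2 \geq \psi_1$ on $\partial\Omega$. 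Since $\alpha$, $\beta$, $\gamma$ are constant and satisfy \eqref{Eq:betaStruct}, Theorem \ref{thm:CPQuad}(b) applies with $w = \psi_2$ and $v = \psi_1$, giving $\psi_2 \geq \psi_1$ in $\Omega$.

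Next I would interchange the roles: $\psi_2 \in \mathrm{USC}(\bar\Omega)$ is a subsolution, $\psi_1 \in \mathrm{LSC}(\bar\Omega)$ is a supersolution, and again $\psi_1 = \psi_2$ on $\partial\Omega$ so $\psi_1 \geq \psi_2$ on $\partial\Omega$. A second application of Theorem \ref{thm:CPQuad}(b) yields $\psi_1 \geq \psi_2$ in $\Omega$. Combining the two inequalities gives $\psi_1 \equiv \psi_2$ in $\Omega$, and since both agree with $\varphi$ on $\partial\Omega$ we get $\psi_1 \equiv \psi_2$ in $\bar\Omega$, which is the claimed uniqueness.

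There is essentially no obstacle here beyond bookkeeping: the only points to check are that $F$ of the form \eqref{Eq:FConstab} with constant coefficients is indeed of the form \eqref{Eq:FIntroDef} (immediate, since \eqref{Eq:FIntroDef} is exactly the structure of $L$ in \eqref{Eq:FConstab}), that the constant-coefficient hypothesis makes the monotonicity condition \eqref{Eq:LMonotone} automatic (as $L$ is then independent of $s$), and that the hypothesis on $\Sigma$ matches: $\Sigma$ non-empty open satisfying \eqref{Eq:UCondPos} and \eqref{Eq:UCone} is precisely what Theorem \ref{thm:CPQuad} requires. One should also note that part (b) of Theorem \ref{thm:CPQuad} does not require the hypothesis $w \geq v$ in $\Omega$ a priori—only $w \geq v$ on $\partial\Omega$—so the symmetric argument is legitimate in both directions. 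I would also remark (cf. Remark \ref{rem:SCP=>CP}) that since $L$ is $s$-independent in this setting, the comparison principle and the principle of propagation of touching points coincide, so one could phrase the proof through either formulation.
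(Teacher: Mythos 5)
Your proof is correct and matches the paper's argument, which simply notes that the statement is a direct consequence of Theorem \ref{thm:CPQuad} (part (b), the comparison principle in the constant-coefficient case, where $L$ is $s$-independent as in Remark \ref{rem:SCP=>CP}); you have merely spelled out the standard two-way application of that comparison principle to two solutions with equal boundary data.
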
 

We also prove the following existence theorem using Perron's method (see \cite{Ishii89-CPAM}). 

\begin{thm}[Existence by sub- and supersolution method]\label{thm:Perron}
Let $\Omega$ and $(F,\Sigma)$ be as in Theorem \ref{thm:Uniq}. Let
$w\in LSC(\overline \Omega)$ and $v\in USC(\overline\Omega )$ be respectively supersolution and subsolution of \eqref{Eq:FpsiEq} in $\Omega$ such that $w\geq v$ in $\Omega$ and $w=v$ on $\partial\Omega$. Then there exists a viscosity solution $u\in C^0(\overline \Omega)$ of \eqref{Eq:FpsiEq} in $\Omega$ satisfying
\begin{align*}
v\le u\le w &\qquad\mbox{in}\ \Omega,\\
u=w=v &\qquad \mbox{on}\ \partial\Omega.
\end{align*}
\end{thm}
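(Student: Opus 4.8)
The plan is to construct $u$ by Perron's method in the viscosity framework of Definition~\ref{Def:ViscositySolution}, following Ishii~\cite{Ishii89-CPAM}, and to invoke the comparison principle of Theorem~\ref{thm:CPQuad}(b) at the last step. Since $\alpha$, $\beta$, $\gamma$ are constant, the coefficient $L$ in \eqref{Eq:FIntroDef} does not depend on $s$, so \eqref{Eq:LMonotone} holds automatically and Theorem~\ref{thm:CPQuad}(b) is applicable. Let $\mathcal{F}$ be the collection of all $\phi:\overline\Omega\to\RR$ with $v\le\phi\le w$ in $\overline\Omega$ and whose upper semicontinuous envelope $\phi^{*}$ is a viscosity subsolution of \eqref{Eq:FpsiEq} in $\Omega$; since $v\in\mathcal{F}$ this family is nonempty, and we set $u:=\sup_{\phi\in\mathcal{F}}\phi$ on $\overline\Omega$. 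Then $v\le u\le w$ in $\overline\Omega$, and because $v=w$ on $\partial\Omega$ we get $u=v=w=:g$ on $\partial\Omega$, where $g\in C^{0}(\partial\Omega)$, being simultaneously upper and lower semicontinuous there. Writing $u^{*}$ and $u_{*}$ for the upper and lower semicontinuous envelopes of $u$, note that $v=v^{*}\le u^{*}$ and $u_{*}\le w_{*}=w$ in $\overline\Omega$.

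First I would show that $u^{*}$ is a viscosity subsolution of \eqref{Eq:FpsiEq} in $\Omega$, by the standard stability of subsolutions under suprema. Given $\xi_{0}\in\Omega$ and $\varphi\in C^{2}$ for which $u^{*}-\varphi$ has a strict local maximum equal to $0$ at $\xi_{0}$, one chooses $\phi_{k}\in\mathcal{F}$ and $\eta_{k}\to\xi_{0}$ with $\phi_{k}(\eta_{k})\to u^{*}(\xi_{0})$, lets $\hat\xi_{k}$ maximize $\phi_{k}^{*}-\varphi$ over a small closed Heisenberg ball centered at $\xi_{0}$, and checks that $\hat\xi_{k}\to\xi_{0}$ and that $\varphi+c_{k}$, with $c_{k}:=\phi_{k}^{*}(\hat\xi_{k})-\varphi(\hat\xi_{k})\to 0$, touches $\phi_{k}^{*}$ from above at $\hat\xi_{k}$; hence $F[\varphi](\hat\xi_{k})\in\overline\Sigma$, since $F[\varphi+c_{k}]=F[\varphi]$ because $L$ does not depend on $s$. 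Letting $k\to\infty$ and using $\varphi\in C^{2}$, the continuity of $L$ (from $L\in C^{0,1}_{loc}$), and the closedness of $\overline\Sigma$, we get $F[\varphi](\xi_{0})\in\overline\Sigma$. In particular $u\in\mathcal{F}$.

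Next I would show that $u_{*}$ is a viscosity supersolution of \eqref{Eq:FpsiEq} in $\Omega$, by the usual ``bump'' argument. If not, there exist $\xi_{0}\in\Omega$, a ball $B$ with $\overline B\subset\Omega$, and $\varphi\in C^{2}$ such that $u_{*}-\varphi$ attains over $\overline B$ its minimum $0$ only at $\xi_{0}$, while $F[\varphi](\xi_{0})\in\Sigma$. If $u_{*}(\xi_{0})=w(\xi_{0})$, then $\varphi$ would touch $w$ from below at $\xi_{0}$, forcing $F[\varphi](\xi_{0})\in\mathcal{S}^{2n\times 2n}\setminus\Sigma$, a contradiction; so $u_{*}(\xi_{0})<w(\xi_{0})$. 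Since $\Sigma$ is open, $F[\varphi]$ continuous, and $F[\varphi+\delta]=F[\varphi]$, for $B$ and $\delta>0$ small enough we have $F[\varphi+\delta]\in\Sigma$ on $B$, $\varphi+\delta<w$ on $B$, and $\varphi+\delta<u$ near $\partial B$ (using $\varphi<u_{*}\le u$ on $\overline B\setminus\{\xi_{0}\}$). Then $U:=\max(u,\varphi+\delta)$ on $B$ and $U:=u$ on $\overline\Omega\setminus B$ lies in $\mathcal{F}$: indeed $v\le U\le w$, and $U^{*}$ is a viscosity subsolution in $\Omega$, being the maximum of the subsolutions $u^{*}$ and $\varphi+\delta$ on $B$ and equal to $u^{*}$ near $\partial B$ and on $\overline\Omega\setminus B$. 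But for $\eta_{k}\to\xi_{0}$ with $u(\eta_{k})\to u_{*}(\xi_{0})=\varphi(\xi_{0})$ one has $U(\eta_{k})\ge\varphi(\eta_{k})+\delta>u(\eta_{k})$ for large $k$, contradicting $u=\sup\mathcal{F}$.

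Finally, to conclude, I would show that $u$ attains the boundary data continuously: for each $\xi_{0}\in\partial\Omega$, a barrier argument --- using the bounds $v\le u\le w$, the continuity of $g$ on $\partial\Omega$, and, if needed, auxiliary barriers built from the Kor\'anyi gauge on $\mathbb{H}^{n}$ together with \eqref{Eq:UCondPos} and \eqref{Eq:UCone} --- shows $u^{*}(\xi_{0})=u_{*}(\xi_{0})=g(\xi_{0})$. Then $u^{*}$ is a subsolution and $u_{*}$ a supersolution of \eqref{Eq:FpsiEq} in $\Omega$ with $u^{*}\le u_{*}$ on $\partial\Omega$, so Theorem~\ref{thm:CPQuad}(b) yields $u^{*}\le u_{*}$ in $\Omega$; since always $u_{*}\le u\le u^{*}$, we conclude $u=u^{*}=u_{*}\in C^{0}(\overline\Omega)$, so $u$ is a viscosity solution of \eqref{Eq:FpsiEq} in $\Omega$ with $v\le u\le w$ in $\Omega$ and $u=w=v$ on $\partial\Omega$. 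The points that I expect to need the most care are the bump step --- verifying that the glued function $U^{*}$ is genuinely a viscosity subsolution of the degenerate subelliptic operator $F$, which leans on the structure \eqref{Eq:FIntroDef} and, ultimately, on the validity of the comparison principle --- and the boundary barrier construction, which has to be carried out intrinsically on the Heisenberg group.
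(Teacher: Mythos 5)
Most of your proposal runs along essentially the same lines as the paper's proof, in mirror image: the paper takes $u$ to be the \emph{infimum} of admissible supersolutions squeezed between $v$ and $w$ and equal to $v=w$ on $\partial\Omega$, proves a stability lemma (the semicontinuous envelope of the extremal function is again a super/subsolution), runs Ishii's bump argument, and concludes with Theorem \ref{thm:CPQuad}(b); your supremum-of-subsolutions version of these three steps (stability of $u^{*}$, the bump argument for $u_{*}$ including the case $u_{*}(\xi_{0})=w(\xi_{0})$, gluing $\max(u,\varphi+\delta)$) is correct modulo routine details and is equivalent to what the paper does.

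The genuine gap is your final step, the boundary identification $u^{*}=u_{*}=g$ on $\partial\Omega$, which is exactly what the application of Theorem \ref{thm:CPQuad}(b) requires. First, the ``bounds $v\le u\le w$ plus continuity of $g$'' do not give it: $w$ is only lower semicontinuous and $v$ only upper semicontinuous, so at $\xi_{0}\in\partial\Omega$ the sandwich only yields $\limsup_{\eta\to\xi_{0}}u(\eta)\le\limsup_{\eta\to\xi_{0}}w(\eta)$ and $\liminf_{\eta\to\xi_{0}}u(\eta)\ge\liminf_{\eta\to\xi_{0}}v(\eta)$, and the hypotheses control these limits in the \emph{wrong} direction ($\limsup v\le g(\xi_{0})\le\liminf w$), so nothing prevents $u^{*}(\xi_{0})>g(\xi_{0})$ or $u_{*}(\xi_{0})<g(\xi_{0})$ from the sandwich alone. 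Second, your fallback --- barriers built from the Kor\'anyi gauge --- is not available under the stated hypotheses: $\Omega$ is an arbitrary bounded open set with no regularity of $\partial\Omega$ whatsoever, and $\Sigma$ is only assumed to satisfy \eqref{Eq:UCondPos} and \eqref{Eq:UCone}, hence may be extremely degenerate; there is no construction of sub/supersolution barriers at a general boundary point in this setting, so this step, as proposed, would fail. The paper avoids barriers entirely: the boundary condition $h=v=w$ on $\partial\Omega$ is built into the admissible class, the equalities $v=u_{*}=u=u^{*}=w$ on $\partial\Omega$ are read off from the construction together with the assumed relations between $v$ and $w$ on $\partial\Omega$, and the comparison principle Theorem \ref{thm:CPQuad}(b) is then invoked (twice: once to show $u_{*}\ge v$, hence $u=u_{*}$, and once at the end). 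To repair your argument you should replace the barrier step by a justification of the boundary identification coming from the construction itself (or add the needed continuity of $v,w$ at $\partial\Omega$ explicitly), rather than appeal to barriers. A smaller point: before running Perron you should also truncate, as the paper does, replacing $v$ by $\max(v,c)$ and $w$ by $\min(w,c')$ (constants are solutions since $F[c]=0\in\partial\Sigma$), so that $u$ and hence $u^{*}$ are finite.
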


%+++++++++++++++++%
\section{Preliminaries}\label{Sec:Prelim}

\subsection{}

In this subsection, we briefly review some basic notations on the Heisenberg group.

The Heisenberg group $\mathbb{H}^{n}$ ($n\geq1$) is the set $\mathbb{R}^{n}\times\mathbb{R}^{n}\times\mathbb{R}$ endowed with the group action $\circ$ defined by 
\begin{equation*}
\xi\circ\hat{\xi}:=\left(x+\hat{x},y+\hat{y},t+\hat{t}+2\sum\limits_{i=1}^{n}(y_{i}\hat{x}_{i}-x_{i}\hat{y}_{i})\right)
\end{equation*}
for any $\xi=(x,y,t)$, $\hat{\xi}=(\hat{x},\hat{y},\hat{t})$ in $\mathbb{H}^{n}$, with $x=(x_{1},\cdots,x_{n})$, $\hat{x}=(\hat{x}_{1},\cdots,\hat{x}_{n})$, $y=(y_{1},\cdots,y_{n})$ and $\hat{y}=(\hat{y}_{1},\cdots,\hat{y}_{n})$ denoting elements of $\mathbb{R}^{n}$. We will also use the notation $\xi=(z,t)$ with $z=x+iy$, $z\in\mathbb{C}^{n}\simeq\mathbb{R}^{n}\times\mathbb{R}^{n}$. We consider the norm on $\mathbb{H}^{n}$ defined by 
\begin{equation*}
|\xi|_{H}:=\left[\left(\sum\limits_{i=1}^{n}(x_{i}^{2}+y_{i}^{2})\right)^{2}+t^{2}\right]^{\frac{1}{4}}=\left(|z|^{4}+t^{2}\right)^{\frac{1}{4}}.
\end{equation*}
The corresponding distance on $\mathbb{H}^{n}$ is defined accordingly by setting 
\begin{equation*}
d_{H}(\xi,\hat{\xi}):=|\hat{\xi}^{-1}\circ\xi|_{H},
\end{equation*}
where $\hat{\xi}^{-1}$ is the inverse of $\hat{\xi}$ with respect to $\circ$, i.e. $\hat{\xi}^{-1}=-\hat{\xi}$. For every $\xi\in\mathbb{H}^{n}$ and $R>0$, we will use the notation 
\begin{equation*}
D_{R}(\xi):=\{\eta\in\mathbb{H}^{n}|d_{H}(\xi,\eta)<R\}.
\end{equation*}

The vector fields 
\begin{align*}
X_{j}&:=\frac{\partial}{\partial x_{j}}+2y_{j}\frac{\partial}{\partial t},\quad j=1,\cdots,n,\\
Y_{j}&:=\frac{\partial}{\partial y_{j}}-2x_{j}\frac{\partial}{\partial t},\quad j=1,\cdots,n,\\
T&:=\frac{\partial}{\partial t}.
\end{align*}
form a base of the Lie algebra of vector fields on the Heisenberg group which are left invariant with respect to the group action $\circ$. For a regular function $u$ defined on a domain in $\mathbb{H}^{n}$, let $\nabla_{H}u$ denote the Heisenberg gradient, or horizontal gradient, of $u$, i.e.
\begin{equation*}
\nabla_{H}u:=(X_{1}u,\cdots,X_{n}u,Y_{1}u,\cdots,Y_{n}u),
\end{equation*}
while let $\nabla_{H}^{2}u$ to denote the Heisenberg Hessian matrix of $u$, i.e.
 \begin{eqnarray*}
\nabla_{H}^{2}u:=\left(\begin{array}{cccccc}
X_{1}X_{1}u&\cdots&X_{n}X_{1}u&Y_{1}X_{1}u &\cdots&Y_{n}X_{1}u\\
\cdots&\cdots&\cdots&\cdots&\cdots&\cdots\\
X_{1}X_{n}u&\cdots&X_{n}X_{n}u&Y_{1}X_{n}u &\cdots&Y_{n}X_{n}u\\
X_{1}Y_{1}u&\cdots&X_{n}Y_{1}u&Y_{1}Y_{1}u &\cdots&Y_{n}Y_{1}u\\
\cdots&\cdots&\cdots&\cdots&\cdots&\cdots\\
X_{1}Y_{1}u&\cdots&X_{n}Y_{1}u&Y_{1}Y_{1}u &\cdots&Y_{n}Y_{1}u
\end{array}\right).
\end{eqnarray*}
We also define
\begin{equation*}
\nabla_{H,s}^{2}u:=\frac{1}{2}\left[\nabla_{H}^{2}u+(\nabla_{H}^{2}u)^{T}\right]
\end{equation*} 
which is the symmetric part of the matrix $\nabla_{H}^{2}u$. Noticing that $\nabla_{H}^{2}u\in \mathcal{S}^{2n\times 2n}\oplus J\mathbb{R}$.

\subsection{}

In this subsection, we briefly recall a well-known regularization of semi-continuous functions in the CR setting which will be used later in the paper.

Assume $n\geq1$ and let $\Omega$ be an open bounded set in $\mathbb{H}^{n}$. For a function $v \in USC(\bar \Omega)$ and $\epsilon > 0$, we define the $\epsilon$-upper envelop of $v$ by
\begin{equation}
v^{\epsilon}(\xi)
	:=\max\limits_{\eta\in\bar\Omega}\Big\{v(\eta) -\frac{1}{\epsilon}|\eta^{-1}\circ\xi|_{H}^{4}\Big\}, \qquad \forall~\xi \in \bar \Omega.
		\label{Eq:UpEnvDef}
\end{equation}
Likewise, for a function $w \in LSC(\bar \Omega)$, its $\epsilon$-lower envelop is defined by
\begin{equation}
w_{\epsilon}(\xi):=\min\limits_{\eta\in\bar\Omega}\Big\{w(\eta) +\frac{1}{\epsilon}|\eta^{-1}\circ\xi|_{H}^{4}\Big\},\qquad\forall~\xi\in\bar\Omega.
	\label{Eq:LowEnvDef}
\end{equation}

We collect below some useful properties. The proof can be found in \cite{WangC}.

\begin{enumerate}[(i)]
\item\label{UpLowPropi} $v^\epsilon, w_\epsilon$ belong to $C(\bar\Omega)$, are monotonic in $\epsilon$ and 
\begin{equation}
\text{$v_\epsilon \rightarrow v$, $w_\epsilon \rightarrow w$ pointwise as $\epsilon \rightarrow 0$.}
	\label{Eq:UpLowConv}
\end{equation}
\item\label{UpLowPropii} $v^{\epsilon}$ and $w_{\epsilon}$ are punctually second order differentiable (see e.g. \cite{CabreCaffBook} for a definition) almost everywhere in $\Omega$ and 
\begin{equation}
\nabla^{2}v^{\epsilon}\geq-\frac{C}{\epsilon}I_{2n+1},\quad\nabla^{2}w_{\epsilon}\leq \frac{C}{\epsilon}I_{2n+1},\quad\mbox{ a.e. in }\Omega,\label{utr}
\end{equation}
where $C:=\|\nabla^{2}_{\xi}|\eta^{-1}\circ\xi|_{H}^{4}\|_{L^{\infty}(\overline{\Omega}\times\overline{\Omega})}$.

\item\label{UpLowPropiii} For any $\xi \in \Omega$, there exists $\xi^* = \xi^*(\xi) \in \bar \Omega$ such that 
$$v^\epsilon(\xi) = v(\xi^*)  - \frac{1}{\epsilon}|(\xi^{*})^{-1}\circ\xi|_{H}^{4} \text{ and } |(\xi^{*})^{-1}\circ\xi|_{H}^{4} \leq \epsilon(\max_{\bar\Omega} v - v(\xi)).$$
Likewise, for any $\xi \in \Omega$, there exists $\xi_* = \xi_*(x) \in \bar \Omega$ such that 
$$w_\epsilon(\xi) = w(\xi_*)  + \frac{1}{\epsilon}|(\xi_{*})^{-1}\circ\xi|_{H}^{4}\text{ and } |(\xi_{*})^{-1}\circ\xi|_{H}^{4} \leq \epsilon(w(\xi) - \min_{\bar\Omega} w).$$

\end{enumerate}

We conclude the section with a simple lemma about the stability of envelops with respect to semi-continuity.

\begin{lem}\label{lem:EquiSC}
Assume that $v \in USC(\bar\Omega)$ and $\inf_{\bar\Omega} v > -\infty$. Then for all sequences $\epsilon_j \rightarrow 0$ and $\xi_j \rightarrow \xi \in \Omega$, there holds
\[
\limsup_{j \rightarrow \infty} v^{\epsilon_j}(\xi_j) \leq v(\xi).
\]
Likewise, if $w \in LSC(\bar\Omega)$ and $\sup_{\bar\Omega} w <  +\infty$, then
\[
\liminf_{j \rightarrow \infty} w_{\epsilon_j}(\xi_j) \geq w(\xi).
\]
\end{lem}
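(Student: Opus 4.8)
The plan is to establish Lemma~\ref{lem:EquiSC} directly from the defining formula \eqref{Eq:UpEnvDef} for $v^\epsilon$, together with property \eqref{UpLowPropiii} and upper semi-continuity of $v$. First I would note that since $\inf_{\bar\Omega} v > -\infty$ and $v \in USC(\bar\Omega)$, $v$ is bounded above on the compact set $\bar\Omega$ (an upper semi-continuous function attains its sup on a compact set), so $M := \max_{\bar\Omega} v$ is finite; this is what makes the max in \eqref{Eq:UpEnvDef} well-defined and the estimates in \eqref{UpLowPropiii} meaningful. For the given sequences $\epsilon_j \rightarrow 0$ and $\xi_j \rightarrow \xi \in \Omega$, property \eqref{UpLowPropiii} supplies points $\xi_j^* = \xi^*(\xi_j) \in \bar\Omega$ with
\[
v^{\epsilon_j}(\xi_j) = v(\xi_j^*) - \frac{1}{\epsilon_j}|(\xi_j^*)^{-1}\circ\xi_j|_H^4 \leq v(\xi_j^*), \qquad |(\xi_j^*)^{-1}\circ\xi_j|_H^4 \leq \epsilon_j\,(M - v(\xi_j)).
\]

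Next I would extract convergence of the $\xi_j^*$. Since $\bar\Omega$ is compact, after passing to a subsequence (it suffices to bound $\limsup v^{\epsilon_j}(\xi_j)$ along an arbitrary subsequence, so no loss of generality) we may assume $\xi_j^* \rightarrow \xi^{**}$ for some $\xi^{**} \in \bar\Omega$. The key point is to show $\xi^{**} = \xi$: from the second estimate, $|(\xi_j^*)^{-1}\circ\xi_j|_H^4 \leq \epsilon_j (M - v(\xi_j))$; the right-hand side tends to $0$ because $\epsilon_j \rightarrow 0$ and $M - v(\xi_j)$ stays bounded (it is at most $M - \inf_{\bar\Omega} v < \infty$; one does \emph{not} need continuity of $v$ at $\xi$ here, only the uniform lower bound). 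Hence $|(\xi_j^*)^{-1}\circ\xi_j|_H \rightarrow 0$, i.e.\ $d_H(\xi_j^*, \xi_j) \rightarrow 0$, and since $\xi_j \rightarrow \xi$ the triangle inequality for $d_H$ gives $\xi_j^* \rightarrow \xi$, so $\xi^{**} = \xi$.

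Finally, combining $v^{\epsilon_j}(\xi_j) \leq v(\xi_j^*)$ with $\xi_j^* \rightarrow \xi$ and the upper semi-continuity of $v$ at $\xi$ yields
\[
\limsup_{j \rightarrow \infty} v^{\epsilon_j}(\xi_j) \leq \limsup_{j \rightarrow \infty} v(\xi_j^*) \leq v(\xi),
\]
which is the claimed inequality (and since this holds along every subsequence, it holds for the original sequence). The statement for $w_\epsilon$ follows by the symmetric argument, applying $-w \in USC(\bar\Omega)$ with $\sup_{\bar\Omega} w < \infty$, or directly using the lower-envelope half of \eqref{UpLowPropiii} and lower semi-continuity of $w$. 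I do not anticipate a genuine obstacle here; the only point requiring a little care is justifying that $M - v(\xi_j)$ is bounded \emph{without} assuming continuity of $v$ — this is exactly why the hypothesis is phrased as $\inf_{\bar\Omega} v > -\infty$ rather than as continuity at $\xi$ — and the passage to subsequences to invoke compactness of $\bar\Omega$.
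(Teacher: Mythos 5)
Your proof is correct and follows essentially the same route as the paper's: both rest on property \eqref{UpLowPropiii}, use $\inf_{\bar\Omega}v>-\infty$ to make the error term $\epsilon_j(\max_{\bar\Omega}v-v(\xi_j))$ tend to zero so that the maximizing points $\xi_j^*$ converge to $\xi$, and then invoke upper semi-continuity of $v$ at $\xi$ (the paper phrases this as a contradiction argument with a $\delta$--$\theta$ neighborhood, while you argue directly with $\limsup$, a cosmetic difference). The subsequence extraction you perform is not even needed, since $d_H(\xi_j^*,\xi_j)\to 0$ and $\xi_j\to\xi$ already give convergence of the whole sequence $\xi_j^*$ to $\xi$.
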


\begin{proof} We will only show the first assertion. Assume by contradiction that there exist some sequences $\epsilon_j \rightarrow 0$, $\xi_j \rightarrow \xi \in \Omega$ such that
\[
v^{\epsilon_j}(\xi_j) \geq v(\xi) + 2\delta \text{ for some } \delta > 0.
\]
By the semi-continuity of $v$, there exists $\theta > 0$ such that 
\[
v(\eta) \leq v(\xi) + \delta \text{ for all } |\xi^{-1}\circ\eta|_{H} < \theta.
\]
By property \eqref{UpLowPropiii}, there exists $\hat \xi_j$ such that
\[
v^{\epsilon_j}(\xi_j) = v(\hat \xi_j) - \frac{1}{\epsilon_j}| \hat \xi_j^{-1}\circ \xi_j|_{H}^4 \text{ and } | \hat \xi_j^{-1}\circ \xi_j|_{H}^4 \leq \epsilon_j (\sup_{\bar\Omega} v - v(\xi_j)) \rightarrow 0,
\]
where we have used $\inf_{\bar\Omega} v > -\infty$. It then follows that $| \hat \xi_j^{-1}\circ \xi_j|_{H}< \theta$ for all sufficiently large $j$ and so
\[
v^{\epsilon_j}(\xi_j) \leq v(\hat \xi_j) \leq v(\xi) + \delta,
\]
which amounts to a contradiction.
\end{proof}

\section{The principle of propagation of touching points}\label{sec:CP}

In this section, we prove Theorem \ref{thm:CPQuad}. We will establish the propagation principle for more general operators of the form
\begin{equation}
F[\psi] = \nabla^2_{H,s}\psi + L(\cdot,\psi,\nabla_{H}\psi),
 \label{Eq:FDef}
\end{equation}
where $L: \Omega \times \RR \times \RR^{2n} \rightarrow \mathcal{S}^{2n \times 2n}$, under some structural assumptions on $L$ and $\Sigma$ which we will detail below. (Clearly, Definition \ref{Def:ViscositySolution} extends to this general setting.)

The following structural conditions on $(F,\Sigma)$ are directly motivated by the CR invariant operator $A[\psi]$. First, we assume that $\Sigma$ satisfies 
\begin{equation}
A\in \Sigma, c\in(0,1)\Rightarrow cA\in \Sigma.
\label{Eq:UCond*S}
\end{equation}
Second, we assume that, for every $R > 0$ and $\Lambda > 0$, there exist $m \geq 0$, $\bar \theta > 0$ and $C > 0$ such that, for $\xi \in \Omega$ and $p \in \RR^{2n}$,
\begin{align}
|\nabla_\xi L(\xi,s,p)| 
	&\leq C|p|^m,\quad |\nabla_p L(\xi,s,p)| \leq C|p|\qquad \forall~ |s| \leq R,
	\label{Eq:Lcond0}\\
0 \leq L(\xi,s', p) - L(\xi,s,p) 
	 &\leq C(s' - s)\,|p|^m\,I_{2n} \qquad \forall~ -R \leq s \leq s' \leq R,
	 \label{Eq:Lcond1}
\end{align}
\begin{align}
&p \cdot \nabla_p L(\xi, s,p) - L(\xi, s,p)
\nonumber\\
	&\qquad 
	+ \theta\Lambda |\nabla_{p} L(\xi, s,p)|\,I_{2n}  - \theta\,I_{2n}
	 \leq C p \otimes p - \frac{1}{C}\, |p|^m\,I_{2n} \qquad\forall~\theta \in [0,\bar\theta], |s| \leq R.
	 \label{Eq:Lcond2}
\end{align}
Note that, \eqref{Eq:Lcond1} and \eqref{Eq:Lcond2} should be understood as inequalities between real symmetric matrices: $M \leq N$ if and only if $N - M$ is non-negative definite. Also, \eqref{Eq:Lcond1} implies that $L$ is non-decreasing in $s$.

\begin{example}\label{Ex:QuadF} For all $\alpha, \beta, \gamma\in C^{0,1}_{loc}(\RR)$ such that $\beta(s) > \beta_0 > 0$ for some constant $\beta_0$, $\gamma\geq0$, $\alpha$ is non-decreasing and $\beta, \gamma$ is non-increasing, the operator
\[
F[\psi] = \nabla_{H,s}^2 \psi + \alpha(\psi)\,\nabla_{H}\psi \otimes \nabla_{H}\psi -\gamma(\psi)\,J\nabla_{H}\psi \otimes J\nabla_{H}\psi- \beta(\psi)\,|\nabla_{H}\psi|^2\,I_{2n}
\]
satisfies conditions \eqref{Eq:Lcond0}-\eqref{Eq:Lcond2}.
\end{example}

We now state our principle of propagation of touching points for operators of the form \eqref{Eq:FDef}.

\begin{thm}
Let $\Omega\subset\mathbb{H}^{n}$ ($n \geq 1$) be a non-empty bounded open set, $L: \Omega \times \RR \times \RR^{2n} \rightarrow \mathcal{S}^{2n\times 2n}$ be locally Lipschitz continuous and satisfy \eqref{Eq:Lcond0}, \eqref{Eq:Lcond1} and \eqref{Eq:Lcond2} for some $m > 1$, $F$ be given by \eqref{Eq:FDef} and $\Sigma$ be a non-empty open 
subset of $\mathcal{S}^{2n\times 2n}$ satisfying \eqref{Eq:UCondPos} and \eqref{Eq:UCond*S}. If $w \in LSC(\bar\Omega)$ and $v\in USC(\bar\Omega)$ are respectively a supersolution and a subsolution of \eqref{Eq:FpsiEq} in $\Omega$, and if $w \geq v$ in $\Omega$ and $w > v$ on $\partial\Omega$, then $w > v$ in $\Omega$.
\label{thm:CPNUE}
\end{thm}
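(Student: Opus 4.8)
The plan is to run the classical ``propagation of touching points'' argument adapted to the Heisenberg setting, combining sup/inf-convolution regularization with a perturbation of the coincidence set. First I would reduce to the case of semiconcave/semiconvex (punctually twice differentiable a.e.) sub/supersolutions by replacing $v$ with its $\epsilon$-upper envelope $v^\epsilon$ and $w$ with its $\epsilon$-lower envelope $w_\epsilon$. Using property \eqref{UpLowPropiii}, $v^\epsilon$ is a subsolution of a perturbed equation on a slightly shrunk domain (the touching test function at $\xi$ is a left-translate of the test function at $\xi^*(\xi)$, and the Heisenberg Hessian transforms controllably under left translation), and similarly for $w_\epsilon$; by Lemma \ref{lem:EquiSC} together with \eqref{Eq:UpLowConv} the boundary and ordering hypotheses $w\ge v$ in $\Omega$, $w>v$ on $\partial\Omega$ pass to the limit, so it suffices to prove $w_\epsilon > v^\epsilon$ in the interior and then let $\epsilon\to0$. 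From now on assume $v$ is punctually twice differentiable a.e. with $\nabla^2 v^\epsilon \geq -\frac{C}{\epsilon}I$ and likewise for $w$.

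Next, suppose for contradiction that the coincidence set $Z = \{\xi\in\Omega : w(\xi)=v(\xi)\}$ is nonempty; since $w>v$ on $\partial\Omega$, $Z$ is a compact subset of $\Omega$, and $w-v\ge0$ attains its minimum value $0$ on $Z$. The heart of the argument is to show $Z$ is open (hence $Z=\emptyset$ by connectedness, a contradiction) — equivalently, to propagate the touching along the horizontal directions and then, via the bracket $[X_j,Y_j] = -4T$, in the missing $T$-direction. Fix $\xi_0 \in Z$. The idea, following Li--Monticelli and the Euclidean arguments of Li--Nguyen--Wang, is to construct a ``barrier'' perturbation: on a small Heisenberg ball $D_r(\xi_0)$ consider $w - v - \eta\, h$ where $h$ is a carefully chosen nonnegative function vanishing to high order at $\xi_0$ and strictly positive on $\partial D_r(\xi_0)$ (a power of the gauge $|\xi_0^{-1}\circ\xi|_H$ works, tuned so that its Heisenberg Hessian and gradient are controlled). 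Using that $w-v\ge0$ with $w-v>0$ on most of $\partial D_r(\xi_0)$, one picks $\eta>0$ small so that $w-v-\eta h$ has an interior negative minimum — but then testing $v$ from above and $w$ from below at that minimum point with the appropriate $C^2$ functions (legitimate since $v^\epsilon, w_\epsilon$ are punctually twice differentiable and the minimum is a genuine touching), and subtracting the two viscosity inequalities $F[\varphi_w] \in \mathcal{S}\setminus\Sigma$ and $F[\varphi_v]\in\overline\Sigma$, yields a contradiction with the subellipticity \eqref{Eq:UCondPos}. The structural conditions \eqref{Eq:Lcond0}--\eqref{Eq:Lcond2} with $m>1$, together with \eqref{Eq:UCond*S}, are exactly what is needed to absorb the first- and zeroth-order error terms $L(\cdot,w,\nabla_H w) - L(\cdot,v,\nabla_H v)$ coming from the $s$- and $p$-dependence of $L$: condition \eqref{Eq:Lcond1} handles the sign of the difference in $s$, \eqref{Eq:Lcond0} gives Lipschitz control in $(\xi,p)$, and \eqref{Eq:Lcond2} — contracted against a vector — provides the crucial differential inequality that lets the barrier $\eta h$ defeat the lower-order terms (the factor $cA \in \Sigma$ for $c\in(0,1)$ from \eqref{Eq:UCond*S} is used to rescale one of the matrices into the open cone).

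I expect the main obstacle to be the lack of ellipticity in the $T$-direction: $\nabla^2_{H,s}\psi$ only sees the $2n$ horizontal directions, so the barrier argument directly propagates the touching set only within the horizontal distribution, and one must separately argue propagation in the $t$-direction. The standard device is to exploit $[X_j, Y_j] = -4T$: once $w\equiv v$ along all horizontal curves through a point, commuting the flows shows the coincidence set is also invariant under the (Lie bracket–generated) missing direction, so by the Chow--Rashevskii connectivity of $\HH^n$ the set $Z$ is open. Making this rigorous at the level of merely semicontinuous (not smooth) solutions — where ``$w\equiv v$ along horizontal curves'' must be extracted from the strict-minimum/barrier contradiction rather than from a pointwise ODE — is the delicate point, and it is handled by choosing the test function $h$ in the barrier step to already encode a full Heisenberg ball (whose interior is reached from the center by horizontal paths), so that a single barrier contradiction on $D_r(\xi_0)$ gives $w>v$ on all of $D_r(\xi_0)\setminus\{\xi_0\}$ at once, bypassing any need to iterate direction by direction. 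A secondary technical nuisance is verifying that left-translation by $\xi^*(\xi)$ interacts well with the envelopes and the operator $F$ — i.e., that $F$ is (quasi-)invariant under the Heisenberg translations appearing in \eqref{Eq:UpEnvDef}–\eqref{Eq:LowEnvDef} up to errors that vanish as $\epsilon\to0$ — which uses the left-invariance of the vector fields $X_j, Y_j$ and the homogeneity \eqref{Eq:UCone} of $\Sigma$.
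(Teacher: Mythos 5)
Your overall strategy is not the paper's, and more importantly it contains a gap that I believe is fatal: you propose to show that the coincidence set $Z=\{w=v\}$ is \emph{open} (via a local Hopf-type barrier on a small Heisenberg ball plus propagation in the $T$-direction through $[X_j,Y_j]$ and Chow--Rashevskii connectivity), and then conclude by connectedness. That would establish the full strong comparison principle (the dichotomy $w>v$ or $w\equiv v$), which is exactly what fails for operators that are merely degenerate elliptic in the sense of \eqref{Eq:UCondPos}; the paper stresses this point and the counterexamples of Li--Nirenberg show that touching sets need not be open in this generality. The hypothesis $w>v$ on $\partial\Omega$ cannot merely serve to make $Z$ compactly contained: it must enter the argument globally. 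Your barrier step is also circular as described: to run a Hopf-type comparison on $D_r(\xi_0)$ with $\xi_0\in Z$ you need $w-v>0$ on (most of) $\partial D_r(\xi_0)$, which is precisely what is unknown near a touching point; and the claim that a single contradiction on $D_r(\xi_0)$ ``gives $w>v$ on all of $D_r(\xi_0)\setminus\{\xi_0\}$ at once'' does not follow from any stated inequality. Consequently no bracket-generating/$T$-direction propagation can rescue the scheme, because the statement it aims at is false under the stated hypotheses.

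The paper's proof is instead a single global perturbation-plus-contradiction argument, and its key ingredients are absent from your proposal. After regularizing by the envelopes $v^\epsilon$, $w_\epsilon$ (this part you share, and Proposition \ref{key lemma} is the precise version of your ``perturbed equation'' remark), the paper: (i) uses the first-variation Lemma \ref{Lem:FVSub} to replace $v^\epsilon$ by $\tilde v_{\epsilon,\tau}=v^\epsilon+\mu\,(e^{\alpha|z|^2}+e^{-\beta v^\epsilon}-\tau)$, gaining a strictly positive excess $\frac{\mu}{C}(1+|\nabla_H v^\epsilon|^m)I_{2n}$ in $F[\tilde v_{\epsilon,\tau}]$; (ii) chooses $\tau$ so that $\zeta=\tilde v_{\epsilon,\tau}-w_\epsilon$ has maximum exactly $\eta$ and is very negative near $\partial\Omega$ (this is where $w>v$ on $\partial\Omega$ is used globally); (iii) invokes the concave-envelope/Jensen-type lemma of Caffarelli--Cabr\'e to produce a touching point $\xi_{\epsilon,\eta}$ at which \emph{both} envelopes are punctually second order differentiable, $|\nabla\zeta|\le C\eta$ and $\nabla^2\zeta\le0$ --- note that in your sketch the minimum point of $w_\epsilon-v^\epsilon-\eta h$ need not be a point of punctual twice differentiability, since that property only holds a.e.; and (iv) proves the claim \eqref{Eq:30Rep} that the regularization error in Proposition \ref{key lemma} is $O(\mu^2)$, so it is beaten by the $O(\mu)$ excess from (i), after which \eqref{Eq:UCondPos} and \eqref{Eq:UCond*S} give the contradiction. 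Steps (i), (ii), (iii) and (iv) are the substance of the proof, and none of them appears in your proposal; without them the error terms from the envelopes and from the $L$-dependence on $(s,p)$ cannot be absorbed, and the touching-point test is not even well defined.
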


Interchanging the role of $\psi$ and $-\psi$ and of $\Sigma$ and $\mathcal{S}^{2n \times 2n} \setminus (- \bar \Sigma)$ (where $-\bar \Sigma = \{-M: M \in \bar \Sigma\}$), we see that an analogous result holds if one replaces \eqref{Eq:UCond*S} by
\begin{equation}
A\in \Sigma, c\in(1,\infty) \Rightarrow cA\in \Sigma,
\label{Eq:UCond*SCat}
\end{equation}
and \eqref{Eq:Lcond2} by: for every $R > 0$ and $\Lambda > 0$, there exist positive constants $\bar \theta, C > 0$ such that, for $0 < \theta \leq \bar\theta$, $\xi \in \Omega$, $|s| \leq R$ and $p \in \RR^n$,
\begin{align}
&p \cdot \nabla_p L(\xi, s,p) - L(\xi, s, p ) 
\nonumber\\
	&\qquad\qquad - \theta\Lambda |\nabla_{p} L(\xi, s,p)|\,I_{2n}  + \theta\,I_{2n}
	 \geq -C p \otimes p + \frac{1}{C}\, |p|^m\,I_{2n}.
	\label{Eq:Lcond2Cat}
\end{align}
We then obtain an equivalent statement of Theorem \ref{thm:CPNUE}:

\begin{thm}
Let $\Omega\subset\mathbb{H}^{n}$ ($n \geq 1$) be a non-empty bounded open set, $L: \Omega \times \RR \times \RR^{2n} \rightarrow \mathcal{S}^{2n\times 2n}$ be locally Lipschitz continuous and satisfy \eqref{Eq:Lcond0}, \eqref{Eq:Lcond1} and \eqref{Eq:Lcond2Cat} for some $m > 1$, $F$ be given by \eqref{Eq:FDef} and $\Sigma$ be a non-empty open 
subset of $\mathcal{S}^{2n\times 2n}$ satisfying \eqref{Eq:UCondPos} and \eqref{Eq:UCond*SCat}. If $w \in LSC(\bar\Omega)$ and $v\in USC(\bar\Omega)$ are respectively a supersolution and a subsolution of \eqref{Eq:FpsiEq} in $\Omega$ and if $w \geq v$ in $\Omega$ and $w > v$ on $\partial\Omega$, then $w > v$ in $\Omega$.
\label{thm:CPNUECat}
\end{thm}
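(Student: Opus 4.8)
The plan is to derive Theorem~\ref{thm:CPNUECat} from Theorem~\ref{thm:CPNUE} by a duality substitution, so that no new analysis is needed. Set $\tilde\psi := -\psi$, and for a symmetric matrix $M$ write $\tilde M := -M$; define $\tilde\Sigma := \mathcal{S}^{2n\times 2n}\setminus(-\bar\Sigma)$, which is open because $-\bar\Sigma$ is closed. First I would record the elementary correspondences: if $\psi$ is a viscosity subsolution of $F[\psi]\in\overline\Sigma$ then $\tilde\psi$ is a viscosity supersolution of $\tilde F[\tilde\psi]\in\mathcal{S}^{2n\times 2n}\setminus\tilde\Sigma$ for the operator
\[
\tilde F[\varphi] := \nabla^2_{H,s}\varphi + \tilde L(\cdot,\varphi,\nabla_H\varphi), \qquad \tilde L(\xi,s,p) := -L(\xi,-s,-p),
\]
and symmetrically subsolutions go to supersolutions. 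The point is that testing $\psi-\varphi\le 0$ near $\xi_0$ is the same as testing $\tilde\psi-\tilde\varphi\ge 0$ near $\xi_0$ with $\tilde\varphi=-\varphi$, and since $\nabla_H$ and $\nabla^2_{H,s}$ are linear operators, $F[\varphi](\xi_0)\in\overline\Sigma$ is equivalent to $\tilde F[\tilde\varphi](\xi_0)\in -\overline\Sigma = \overline{(-\Sigma)} = \overline{\mathcal{S}^{2n\times 2n}\setminus\tilde\Sigma}$ — here one should double-check that $-\overline\Sigma$ and $\overline{\mathcal{S}^{2n\times 2n}\setminus\tilde\Sigma}$ coincide, which holds because $\partial(-\bar\Sigma) = -\partial\Sigma$. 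I would phrase this as a short lemma so the bookkeeping is transparent.

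Next I would verify that the structural hypotheses transform correctly. Condition \eqref{Eq:UCondPos} for $\Sigma$ gives, for $\tilde\Sigma$: if $\tilde A\in\tilde\Sigma$ and $\tilde B>0$, then $\tilde A - \tilde B \in \tilde\Sigma$; equivalently $A\in -\tilde\Sigma$ and $B>0$ imply $A+B\in -\tilde\Sigma$, and since $-\tilde\Sigma = \text{int}(-\bar\Sigma)\supset$ ... — more cleanly, one checks directly that $\tilde B>0$, $\tilde A\in\tilde\Sigma$ forces $\tilde A-\tilde B\notin -\bar\Sigma$ using $-\tilde B<0$ and \eqref{Eq:UCondPos}. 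So $\tilde\Sigma$ satisfies the version of \eqref{Eq:UCondPos} needed for the ``$\mathcal{S}^{2n\times 2n}\setminus\Sigma$'' side of Theorem~\ref{thm:CPNUE}; note that Theorem~\ref{thm:CPNUE} as stated already handles supersolutions of $F[\psi]\in\mathcal{S}^{2n\times 2n}\setminus\Sigma$ under \eqref{Eq:UCondPos} on $\Sigma$ itself, so we actually want to feed it $\tilde\Sigma$ with the property that $\tilde\Sigma$ is open and $\tilde A\in\tilde\Sigma,\ \tilde B>0\Rightarrow \tilde A+\tilde B\in\tilde\Sigma$; this last property for $\tilde\Sigma$ is exactly \eqref{Eq:UCondPos} for $-\Sigma$, which follows from \eqref{Eq:UCondPos} for $\Sigma$. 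Then \eqref{Eq:UCond*SCat} for $\Sigma$, namely $A\in\Sigma,\ c>1\Rightarrow cA\in\Sigma$, translates (via $c\mapsto 1/c\in(0,1)$ and $A\mapsto -A$) into \eqref{Eq:UCond*S} for $\tilde\Sigma$; I would check the direction of the implication carefully since this is where the cone condition flips. Finally, for the operator: $\nabla_\xi\tilde L(\xi,s,p) = -(\nabla_\xi L)(\xi,-s,-p)$ and $\nabla_p\tilde L(\xi,s,p) = (\nabla_p L)(\xi,-s,-p)$, so \eqref{Eq:Lcond0} is preserved; $\tilde L(\xi,s',p)-\tilde L(\xi,s,p) = L(\xi,-s,-p)-L(\xi,-s',-p)\ge 0$ and $\le C(s'-s)|p|^m I_{2n}$ by \eqref{Eq:Lcond1} applied at $-s'\le -s$, so \eqref{Eq:Lcond1} is preserved; and $p\cdot\nabla_p\tilde L(\xi,s,p) - \tilde L(\xi,s,p) = -[(-p)\cdot(\nabla_p L)(\xi,-s,-p) - L(\xi,-s,-p)]$ together with $|\nabla_p\tilde L| = |\nabla_p L|$ at the reflected point converts \eqref{Eq:Lcond2Cat} for $L$ precisely into \eqref{Eq:Lcond2} for $\tilde L$ (the sign flip on the $\theta\Lambda|\nabla_p L|I_{2n}$ and $\theta I_{2n}$ terms and the reversal of the matrix inequality match up).

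With all of this in place the proof is one line: given $w,v$ as in Theorem~\ref{thm:CPNUECat}, set $\tilde w := -v\in LSC(\bar\Omega)$, $\tilde v := -w\in USC(\bar\Omega)$; by the lemma these are a supersolution and subsolution of $\tilde F[\tilde\psi]\in\partial\tilde\Sigma$, the hypotheses $w\ge v$ in $\Omega$ and $w>v$ on $\partial\Omega$ become $\tilde w\ge\tilde v$ in $\Omega$ and $\tilde w>\tilde v$ on $\partial\Omega$, and $(\tilde F,\tilde\Sigma)$ satisfies all hypotheses of Theorem~\ref{thm:CPNUE} by the computations above; hence $\tilde w>\tilde v$ in $\Omega$, i.e. $w>v$ in $\Omega$.

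The routine parts are the chain-rule identities for $\tilde L$ and the inclusion manipulations for $\overline{\pm\Sigma}$ and boundaries. The one genuinely delicate point — the ``main obstacle'' — is making sure the viscosity-solution correspondence is stated with the correct closures: one must check that a function satisfying $F[\psi]\in\mathcal{S}^{2n\times 2n}\setminus\Sigma$ in the viscosity sense corresponds under $\psi\mapsto-\psi$ to one satisfying $\tilde F[\tilde\psi]\in\overline{\tilde\Sigma}$ (not merely $\tilde\Sigma$), which requires the identity $-(\mathcal{S}^{2n\times 2n}\setminus\Sigma) = \mathcal{S}^{2n\times 2n}\setminus(-\Sigma)$ and $-\overline{\Sigma} = \overline{-\Sigma} = \mathcal{S}^{2n\times 2n}\setminus\tilde\Sigma$; since $\tilde\Sigma$ was defined as $\mathcal{S}^{2n\times 2n}\setminus(-\bar\Sigma)$, one has $\mathcal{S}^{2n\times 2n}\setminus\tilde\Sigma = -\bar\Sigma = \overline{-\Sigma}$, so the identities close up, but it is worth writing them out explicitly to avoid an off-by-a-closure error. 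Everything else is bookkeeping, and no new PDE estimate is needed beyond Theorem~\ref{thm:CPNUE}.
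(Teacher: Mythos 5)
Your proposal is correct and is essentially the paper's own argument: the paper obtains Theorem~\ref{thm:CPNUECat} from Theorem~\ref{thm:CPNUE} by exactly this interchange of $\psi$ and $-\psi$ and of $\Sigma$ and $\mathcal{S}^{2n\times 2n}\setminus(-\bar\Sigma)$, which converts \eqref{Eq:UCond*SCat} into \eqref{Eq:UCond*S} and \eqref{Eq:Lcond2Cat} into \eqref{Eq:Lcond2}. Your write-up merely makes explicit the bookkeeping (the sub/supersolution correspondence, the closure identities via \eqref{Eq:UCondPos}, and the transformed structural conditions) that the paper states in one sentence.
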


Assuming the correctness of the above theorem for the moment, we proceed with the 
\begin{proof}[Proof of Theorem \ref{thm:CPQuad}]
If $\beta > \beta_0 > 0$ and $\gamma\geq0$, the result is covered by Theorem \ref{thm:CPNUE}. If $\beta < -\beta_0 < 0$ and $\gamma\leq0$, the result is covered by Theorem \ref{thm:CPNUECat}. It remains to consider the case $\beta \equiv \gamma\equiv0$ and $\alpha$ is constant. The operator $F$ then takes the form
\[
F[\psi] = \nabla^2_{H,s}\psi + \alpha\,\nabla_{H}\psi \otimes \nabla_{H}\psi.
\]
When $\alpha \neq 0$, we note that the functions $\tilde w = \frac{\alpha}{|\alpha|} e^{\alpha w}$ and $\tilde v = \frac{\alpha}{|\alpha|} e^{\alpha v}$ satisfy $\tilde w \in LSC(\bar\Omega)$, $\tilde v\in USC(\bar\Omega)$ and, in view of \eqref{Eq:UCone},
\[
\nabla^2_{H,s}\tilde w = |\alpha|\,|\tilde w|\,F[w] \in \mathcal{S}^{2n\times 2n} \setminus \Sigma \text{ and } \nabla^2 _{H,s}\tilde v = |\alpha|\,|\tilde v| F[v] \in \bar \Sigma.
\]
Therefore, we can assume without loss of generality that $\alpha = 0$, i.e.
\[
F[\psi] = \nabla^2_{H,s} \psi.
\]

In this case, note that 
\begin{equation}
F[\psi + \mu\,|\xi|^2]  = F[\psi] + 2\mu(I_{2n}+4Jz\otimes Jz).
	\label{Eq:FVab=0}
\end{equation}
An easy adaption of the proof of Theorem \ref{thm:CPNUE} below (but using \eqref{Eq:FVab=0} instead of Lemma \ref{Lem:FVSub}) yields the result.
\end{proof}

We turn now to the proof of Theorem \ref{thm:CPNUE}.

%+++++++++++++++++%

\subsection{Error in regularizations}

The following result estimates the error to \eqref{Eq:FpsiEq} when making regularizations by lower and upper envelops.

\begin{prop}\label{key lemma}
Assume $n \geq 1$. Let $\Omega\subset\mathbb{H}^{n}$ be a bounded open set, $\Sigma$ be an open subset of $\mathcal{S}^{2n\times 2n}$ satisfying \eqref{Eq:UCondPos}, $L: \Omega \times \RR \times \RR^{2n} \rightarrow \mathcal{S}^{2n\times 2n}$ be a locally Lipschitz continuous function satisfying \eqref{Eq:Lcond0} and the second inequality in \eqref{Eq:Lcond1} for some $m \geq 0$, and $F$ be given by \eqref{Eq:FDef}. For any $M > 0$, there exists $a > 0$ such that if $w \in LSC(\Omega)$ is a supersolution of \eqref{Eq:FpsiEq} in $\Omega$ and if $w_\epsilon$ is punctually second order differentiable at a point $\xi \in \Omega$ and $|w_\epsilon(\xi)| + |w(\xi_*)| \leq M$, then
\begin{align*}
&F[w_\epsilon](\xi) - a( |\xi-\xi_*|+ \frac{1}{\epsilon}|(\xi_*)^{-1}\circ\xi|_{H}^{4})\,|\nabla_{H}w_\epsilon(\xi)|^m\,I_{2n} \in \mathcal{S}^{2n\times 2n} \setminus \Sigma.
\end{align*}
Analogously, if $v \in USC(\Omega)$ is a subsolution of \eqref{Eq:FpsiEq} in $\Omega$, and if $v^\epsilon$ is punctually second order differentiable at a point $\xi \in \Omega$ and $|v^\epsilon(\xi)| + |v(\xi^*)| \leq M$, then
\begin{align*}
&F[v^\epsilon](\xi) +  a( |\xi-\xi^*|+ \frac{1}{\epsilon}|(\xi^*)^{-1}\circ\xi|_{H}^{4})\,|\nabla_{H}v^\epsilon(\xi)|^m\,I_{2n}\in \Sigma.
\end{align*}
\end{prop}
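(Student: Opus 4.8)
The plan is to prove only the supersolution statement; the subsolution case then follows by the usual symmetry (replacing $\psi$ by $-\psi$ and $\Sigma$ by $\mathcal{S}^{2n\times 2n}\setminus(-\bar\Sigma)$, which interchanges sub- and supersolutions and flips the sign in the correction term). Fix $\xi\in\Omega$ at which $w_\epsilon$ is punctually second order differentiable, and let $\xi_*=\xi_*(\xi)\in\bar\Omega$ be the minimizing point from property \eqref{UpLowPropiii}, so that
\[
w_\epsilon(\xi)=w(\xi_*)+\tfrac{1}{\epsilon}|(\xi_*)^{-1}\circ\xi|_{H}^{4}.
\]
The key geometric observation is that, by the very definition \eqref{Eq:LowEnvDef}, for $\eta$ near $\xi$ one has $w_\epsilon(\eta)\le w(\xi_*)+\tfrac1\epsilon|(\xi_*)^{-1}\circ\eta|_H^4$, with equality at $\eta=\xi$. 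Hence the smooth function $\varphi(\eta):=w(\xi_*)+\tfrac1\epsilon|(\xi_*)^{-1}\circ\eta|_H^4$ touches $w_\epsilon$ from above at $\xi$. But since $w$ is a supersolution and $w_\epsilon$ is built from $w$ by an infimal convolution in the group, I want to transfer the supersolution information at $\xi_*$ (tested against the shifted kernel) to a statement about $F[w_\epsilon](\xi)$. Concretely: the function $\eta\mapsto w(\eta)$ is touched from below at $\xi_*$ by $\eta\mapsto w_\epsilon(\xi)-\tfrac1\epsilon|\eta^{-1}\circ\xi|_H^4$ (again immediate from \eqref{Eq:LowEnvDef}), so the supersolution property gives $F[\phi](\xi_*)\in\mathcal{S}^{2n\times2n}\setminus\Sigma$ where $\phi(\eta)=-\tfrac1\epsilon|\eta^{-1}\circ\xi|_H^4+\mathrm{const}$.

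The next step is to compare the two Hessian/gradient data — those of $\phi$ at $\xi_*$ and those of $w_\epsilon$ at $\xi$ — and show they differ by a controlled amount. At the level of the Heisenberg gradient, $\nabla_H\varphi(\xi)$ and $-\nabla_H\phi(\xi_*)$ are both essentially $\tfrac1\epsilon\nabla_H(|\cdot|_H^4)$ evaluated at conjugate points, so $\nabla_H w_\epsilon(\xi)=\nabla_H\varphi(\xi)$ and the ``true'' gradient relevant at $\xi_*$ agree up to the discrepancy measured by $|\xi-\xi_*|$ and $\tfrac1\epsilon|(\xi_*)^{-1}\circ\xi|_H^4$ (which is exactly the quantity appearing in the statement and which, by \eqref{UpLowPropiii}, is small). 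At the level of the symmetrized Heisenberg Hessian, $\nabla^2_{H,s}w_\epsilon(\xi)\le \nabla^2_{H,s}\varphi(\xi)$ from the touching-from-above (using punctual second differentiability to make this pointwise), while the test-Hessian at $\xi_*$ equals $\nabla^2_{H,s}\phi(\xi_*)$; a Taylor expansion of the smooth kernel $|\eta^{-1}\circ\zeta|_H^4$ in both slots, together with left-invariance of the vector fields $X_j,Y_j$, shows that $\nabla^2_{H,s}\varphi(\xi)$ and $\nabla^2_{H,s}\phi(\xi_*)$ agree up to an error bounded by $\tfrac{C}{\epsilon}(|\xi-\xi_*|+\tfrac1\epsilon|(\xi_*)^{-1}\circ\xi|_H^4)$ times a matrix, but — crucially — after rescaling by the natural dilation one gets the bound in terms of $|\nabla_H w_\epsilon(\xi)|^m$ as in the conclusion, using $|\nabla_H w_\epsilon(\xi)|\sim\tfrac1\epsilon|(\xi_*)^{-1}\circ\xi|_H^{3}$ and the homogeneity of the kernel. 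The hypotheses \eqref{Eq:Lcond0} (Lipschitz control of $L$ in $\xi$ and $p$ with growth $|p|^m$ and $|p|$ respectively) and the second inequality of \eqref{Eq:Lcond1} are then used to absorb the change in the lower-order term $L(\cdot,\cdot,\nabla_H\cdot)$: the $\xi$-variation contributes $O(|\xi-\xi_*|\,|\nabla_H w_\epsilon|^m)$, the $s$-variation is handled by monotonicity of $L$ in $s$ together with the sign-definite bound on $w(\xi_*)-w_\epsilon(\xi)$, and the $p$-variation contributes $O(|\nabla_H w_\epsilon|\cdot(\text{gradient discrepancy}))$ which is again of the required form.

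Putting these together: $F[w_\epsilon](\xi)$ differs from $F[\phi](\xi_*)$ by a symmetric matrix bounded below by $-a(|\xi-\xi_*|+\tfrac1\epsilon|(\xi_*)^{-1}\circ\xi|_H^4)\,|\nabla_H w_\epsilon(\xi)|^m\,I_{2n}$ for a suitable $a=a(M)>0$. Since $F[\phi](\xi_*)\in\mathcal{S}^{2n\times2n}\setminus\Sigma$ and $\mathcal{S}^{2n\times2n}\setminus\Sigma$ is closed under adding non-negative definite matrices ``in the right direction'' — more precisely, by \eqref{Eq:UCondPos}, $\Sigma+\{B>0\}\subset\Sigma$, so $\mathcal{S}^{2n\times2n}\setminus\Sigma-\{B>0\}\subset\mathcal{S}^{2n\times2n}\setminus\Sigma$ — subtracting the correction term keeps us in $\mathcal{S}^{2n\times2n}\setminus\Sigma$, which is exactly the assertion. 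I expect the main obstacle to be the second paragraph: carefully expanding the Carnot–Carathéodory-type kernel $|\eta^{-1}\circ\zeta|_H^4$ to second order in \emph{both} arguments, keeping track of the non-commutativity of the group and of the fact that the Heisenberg Hessian is not the Euclidean one, and extracting precisely the factor $|\nabla_H w_\epsilon(\xi)|^m$ rather than some other power of the small quantities. The punctual-second-differentiability hypothesis is what lets this local Taylor comparison be made pointwise rather than only in a measure-theoretic or viscosity sense.
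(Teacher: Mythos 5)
Your overall strategy---test $w$ from below at the touching point $\xi_*$, transfer the information back to $\xi$, and push the change of arguments of $L$ into the error term---is in the right spirit, but the step in your second paragraph, where you compare the horizontal jets of the penalization kernel at $\xi_*$ with those of $w_\epsilon$ at $\xi$, is a genuine gap, and it is precisely the step the paper's proof is built to avoid. By your own accounting this comparison produces a Hessian/gradient discrepancy of size $\frac{C}{\epsilon}\big(|\xi-\xi_*|+\frac1\epsilon|(\xi_*)^{-1}\circ\xi|_{H}^{4}\big)$, which is not of the form allowed in the Proposition: there the constant $a$ must be independent of $\epsilon$ and the error must carry the factor $|\nabla_{H}w_\epsilon(\xi)|^{m}$. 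Your proposed repair, based on ``$|\nabla_{H}w_\epsilon(\xi)|\sim\frac1\epsilon|(\xi_*)^{-1}\circ\xi|_{H}^{3}$ and homogeneity of the kernel,'' cannot work: the horizontal gradient of the gauge vanishes along the center (if $(\xi_*)^{-1}\circ\xi$ has zero $z$-part, then $X_j(|z|^4+t^2)=4|z|^2x_j+4y_jt=0$), so $\nabla_{H}w_\epsilon(\xi)$ may vanish while $\xi\neq\xi_*$; for $m>0$ your correction term is then identically zero while the jet discrepancy you need to absorb is not. Moreover, the exponent $m$ in the statement has nothing to do with the kernel's homogeneity; it comes from the structure conditions \eqref{Eq:Lcond0} and \eqref{Eq:Lcond1} on $L$.

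The missing idea is the translation (``magic'') property of the infimal convolution on the group, which makes the second-order data transfer \emph{exactly}, leaving no jet error to absorb. The paper compares $w_\epsilon(\xi\circ\eta)$ with $w(\xi_*\circ\eta)$ for $\eta$ near $0$: by the definition \eqref{Eq:LowEnvDef} the penalization term along these simultaneously translated points is the fixed constant $\frac1\epsilon|(\xi_*)^{-1}\circ\xi|_H^4$, so the punctual second-order expansion of $w_\epsilon$ at $\xi$ yields a quadratic polynomial $P_\epsilon$ touching $\eta\mapsto w(\xi_*\circ\eta)$ from below at $\eta=0$ with $\nabla_H P_\epsilon(0)=\nabla_H w_\epsilon(\xi)$ and $\nabla^2_{H,s}P_\epsilon(0)=\nabla^2_{H,s}w_\epsilon(\xi)$ (left-invariance of $X_j,Y_j$ is what makes the translated function again a supersolution with the same horizontal jet). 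The viscosity property then gives $\nabla^2_{H,s}w_\epsilon(\xi)+L(\xi_*,w(\xi_*),\nabla_H w_\epsilon(\xi))\in\mathcal{S}^{2n\times 2n}\setminus\Sigma$, and the \emph{only} discrepancy is in the first two arguments of $L$; it is bounded by $C\big(|\xi-\xi_*|+\frac1\epsilon|(\xi_*)^{-1}\circ\xi|_H^4\big)|\nabla_H w_\epsilon(\xi)|^m I_{2n}$ using \eqref{Eq:Lcond0} and the second inequality in \eqref{Eq:Lcond1} with $0\le w_\epsilon(\xi)-w(\xi_*)=\frac1\epsilon|(\xi_*)^{-1}\circ\xi|_H^4$, after which \eqref{Eq:UCondPos} concludes, as you indicate. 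Note also that invoking monotonicity of $L$ in $s$ for the $s$-variation, as you do, gives an inequality in the wrong direction for the supersolution case; it is the quantitative upper bound in \eqref{Eq:Lcond1} (the only part of that condition assumed in the Proposition) that produces the $\frac1\epsilon|(\xi_*)^{-1}\circ\xi|_H^4$ term in the conclusion. Without the exact jet transfer, your argument does not close.
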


\begin{proof}
We only give the proof of the first assertion. The second assertion can be proved in a similar way.

We have 
\begin{equation}
w_{\epsilon}(\xi\circ\eta)\geq w_{\epsilon}(\xi)+\nabla w_{\epsilon}(\xi)\cdot (\xi\circ\eta-\xi)+\frac{1}{2}(\xi\circ\eta-\xi)^{T}\nabla^{2}w_{\epsilon}(\xi)(\xi\circ\eta-\xi)+o(|\eta|^{2}),\quad\mbox{ as }|\eta|\rightarrow 0.\label{yre2}
\end{equation}
By the definition of $w_{\epsilon}$, we have
\begin{equation*}
w_{\epsilon}(\xi\circ\eta)\leq w(\xi_{*}\circ\eta) +\frac{1}{\epsilon}|(\xi_{*}\circ\eta)^{-1}\circ(\xi\circ\eta)|_{H}^{4}=w(\xi_{*}\circ\eta) +\frac{1}{\epsilon}|(\xi_{*})^{-1}\circ\xi|_{H}^{4},\label{yree}
\end{equation*}
and therefore, in view of (\ref{yre2}),
\begin{align*}
w(\xi_{*}\circ\eta) &\geq w_{\epsilon}(\xi\circ\eta) - \frac{1}{\epsilon}|(\xi_{*})^{-1}\circ\xi|_{H}^{4}\\
&\geq P_{\epsilon}(\eta)+o(|\eta|^{2}),\quad\mbox{ as }\eta\rightarrow 0,
\end{align*}
where $P_{\epsilon}$ is a quadratic polynomial with 
\begin{align*}
P_{\epsilon}(0)&=w_{\epsilon}(\xi) -\frac{1}{\epsilon}|(\xi_{*})^{-1}\circ\xi|_{H}^{4} = w(\xi_*), \nonumber \\
\nabla_{H}P_{\epsilon}(0)&=\nabla_{H}w_{\epsilon}(\xi),\\
\nabla^{2}_{H,s}P_{\epsilon}(0)&=\nabla^{2}_{H,s}w_{\epsilon}(\xi).
\end{align*}
Since $f(\eta):=w(\xi_{*}\circ\eta)$ is a viscosity supersolution of \eqref{Eq:FpsiEq} in $\tilde{\Omega}:=\{\eta\in\mathbb{H}^{n}:\xi_{*}\circ\eta\in\Omega\}$, we thus have 
\[
\nabla^2_{H,s}w_\epsilon(\xi) + L(\xi_*, w(\xi_*),\nabla_{H}w_{\epsilon}(\xi)) = F[P_{\epsilon}](0)\in \mathcal{S}^{2n\times 2n}\setminus \Sigma.\label{fanyang}
\]
On the other hand, in view of \eqref{Eq:Lcond0}, \eqref{Eq:Lcond1} and $w(\xi_*) = w_{\epsilon}(\xi) -\frac{1}{\epsilon}|(\xi_{*})^{-1}\circ\xi|_{H}^{4}  \leq w_\epsilon(\xi)$, 
\[
L(\xi_*, w(\xi_*),\nabla_{H}w_{\epsilon}(\xi))  - L(\xi, w_{\epsilon}(\xi),\nabla_{H}w_{\epsilon}(\xi)) \leq C(|\xi-\xi_*| + \frac{1}{\epsilon}|(\xi_{*})^{-1}\circ\xi|_{H}^{4})|\nabla_{H}w_\epsilon(\xi)|^m\,I_{2n}.
\]
The conclusion is readily seen thanks to \eqref{Eq:UCondPos}.
\end{proof}

%+++++++++++++++++%

\subsection{First variation of $F[\psi]$}

As mentioned in the introduction, we would like to perturb a given function $\psi$ to another function $\tilde\psi$ in such a way that $F[\tilde\psi] $ is bounded from above/below by a multiple of $F[\psi]$ and with a favorable excess term. This will be important in controlling error accrued in other parts of the proof of Theorem \ref{thm:CPNUE} (e.g. in regularizations).

\begin{lem}\label{Lem:FVSub}
Let $\Omega$ be an open bounded subset of $\HH^n$, $n \geq 1$, $L: \Omega \times \RR\times \RR^{2n} \rightarrow \mathcal{S}^{2n\times 2n}$ be a locally Lipschitz continuous function satisfying \eqref{Eq:Lcond1} and \eqref{Eq:Lcond2} for some $m > 1$, $F$ be given by \eqref{Eq:FDef}, and $\psi: \Omega \rightarrow \RR \cup \{\pm \infty\}$. For any $M > 0$, there exist positive constants $\mu_0, \alpha, \beta, \delta, K_0 > 0$, depending only on an upper bound of $M$, $L$ and $\Omega$, such that 
$$
\mu_0\,\beta\,\sup_\Omega e^{-\beta\psi} \leq \frac{1}{2},
$$
and, for any $0 < \mu < \mu_0$, $\tau \in \RR$, the function $\tilde\psi_{\mu,\tau} = \psi + \mu \,(e^{\alpha|z|^2} + e^{-\beta\psi} - \tau)$ satisfies
\[
F[\tilde\psi_{\mu,\tau}] \geq (1 - \mu\,\beta\,e^{-\beta\psi})F[\psi]
  	+ \mu\,K_0[(1 + |\nabla_{H}\psi|^m)\,I_{2n} + \nabla_{H} \psi \otimes \nabla_{H}\psi]
\]
in the set
\begin{align}
\Omega^{M,\delta} 
	&:= \Big\{\xi \in \Omega: \text{$\psi$ is punctually second order differentiable at $\xi$},\nonumber \\
		&\qquad\qquad |\psi(\xi)| \leq M, \text{ and }  e^{\alpha|z|^2} + e^{-\beta\psi(\xi)} - \tau \geq -\delta\Big\}.\label{Eq:OmegaMDef}
\end{align}
\end{lem}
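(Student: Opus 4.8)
The plan is to compute $F[\tilde\psi_{\mu,\tau}]$ directly by expanding $\nabla_H$ and $\nabla^2_{H,s}$ of the perturbation $g(\xi) := \mu\,(e^{\alpha|z|^2} + e^{-\beta\psi} - \tau)$ and then absorbing the resulting terms into the claimed lower bound. First I would record the basic Heisenberg-calculus facts: $\nabla_H |z|^2$ is linear in $z$ with $|\nabla_H|z|^2|$ bounded on $\Omega$, and $\nabla^2_{H,s}|z|^2$ equals a fixed positive-semidefinite matrix plus (possibly) a bounded perturbation coming from the noncommutativity of the $X_j, Y_j$; the key point is that $\nabla^2_{H,s}(e^{\alpha|z|^2}) = e^{\alpha|z|^2}\big(\alpha\,\nabla^2_{H,s}|z|^2 + \alpha^2\,\nabla_H|z|^2 \otimes \nabla_H|z|^2\big)$, which for $\alpha$ large enough dominates on $\Omega$ and produces a term $\geq c(\alpha)\,e^{\alpha|z|^2}\,I_{2n}$ with $c(\alpha)\to\infty$. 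Similarly $\nabla^2_{H,s}(e^{-\beta\psi}) = e^{-\beta\psi}\big(-\beta\,\nabla^2_{H,s}\psi + \beta^2\,\nabla_H\psi\otimes\nabla_H\psi\big)$, which contributes exactly the $-\mu\beta e^{-\beta\psi}\nabla^2_{H,s}\psi$ term matched against $(1-\mu\beta e^{-\beta\psi})F[\psi]$, plus a favorable $\mu\beta^2 e^{-\beta\psi}\,\nabla_H\psi\otimes\nabla_H\psi$ term.

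Next I would handle the first-order part, which is the subtle one. Writing $\nabla_H\tilde\psi_{\mu,\tau} = \nabla_H\psi + \nabla_H g$ and using $L(\cdot,s,p)$ Lipschitz in $p$ via the second estimate in \eqref{Eq:Lcond0} (i.e. $|\nabla_p L| \leq C|p|$), I would Taylor-expand $L(\cdot,\tilde\psi_{\mu,\tau},\nabla_H\psi + \nabla_H g) - L(\cdot,\psi,\nabla_H\psi)$ into: (i) an $s$-increment term controlled by \eqref{Eq:Lcond1} and nonnegative since $g$ can be assumed of a controlled sign on $\Omega^{M,\delta}$ — actually one only needs $|g|$ small, which is where the constraint $\mu_0\beta\sup e^{-\beta\psi}\le\frac12$ and the smallness of $\mu_0$ enter; and (ii) a $p$-increment term of size $\leq C|\nabla_H\psi|\,|\nabla_H g|$, hence $\leq C\mu\,|\nabla_H\psi|\,(\alpha e^{\alpha|z|^2}|\nabla_H|z|^2| + \beta e^{-\beta\psi}|\nabla_H\psi|)$, which by Cauchy–Schwarz splits into $\mu\,(\text{bounded})\,I_{2n} + \mu\,(\text{small})\,\nabla_H\psi\otimes\nabla_H\psi$. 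The crux is then to feed the structural inequality \eqref{Eq:Lcond2} — applied with the parameter $\Lambda$ chosen as $\sup_\Omega(\alpha e^{\alpha|z|^2}|\nabla_H|z|^2|)$ and $\theta$ of order $\mu$ — so that the combination $p\cdot\nabla_p L - L + \theta\Lambda|\nabla_p L|I - \theta I$ is bounded above by $Cp\otimes p - \frac1C|p|^m I$; this is exactly the quantity that appears when one groups the zeroth-order $-\mu\,L$ contribution from scaling against the gradient-error terms, and it is what lets one conclude that all error is dominated by $\mu K_0[(1+|\nabla_H\psi|^m)I_{2n} + \nabla_H\psi\otimes\nabla_H\psi]$ after choosing $\alpha$ large, then $\beta$ large, then $\delta$ and $\mu_0$ small in that order.

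I would organize the bookkeeping so that the constants are chosen in the dependency order $\alpha \to \beta \to \delta \to \mu_0 \to K_0$: choose $\alpha$ large so $c(\alpha) I_{2n}$ beats the bounded error from the $e^{\alpha|z|^2}$ cross terms and from the $\Lambda$-dependent term in \eqref{Eq:Lcond2}; choose $\beta$ large so the $\mu\beta^2 e^{-\beta\psi}\nabla_H\psi\otimes\nabla_H\psi$ term beats the $C\mu\beta e^{-\beta\psi}|\nabla_H\psi|^2$ error and so that $e^{-\beta\psi}$ on $\{|\psi|\le M\}$ stays in a fixed range; shrink $\delta$ just to keep $\tilde\psi_{\mu,\tau}$ within the $s$-range $[-R,R]$ (with $R$ depending on $M$) where \eqref{Eq:Lcond1}–\eqref{Eq:Lcond2} apply and to control $|g|$; and finally take $\mu_0$ small enough that $1-\mu\beta e^{-\beta\psi} > 0$, that the normalization $\mu_0\beta\sup e^{-\beta\psi}\le\frac12$ holds, and that the $\theta = O(\mu)$ used in \eqref{Eq:Lcond2} lies in $[0,\bar\theta]$. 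With these choices every term either matches $(1-\mu\beta e^{-\beta\psi})F[\psi]$ or is bounded below by a multiple of $\mu[(1+|\nabla_H\psi|^m)I_{2n} + \nabla_H\psi\otimes\nabla_H\psi]$, and the lemma follows.

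The main obstacle I anticipate is the precise matching in the first-order term: one must verify that the zeroth-order scaling deficit (roughly $-\mu\,L(\cdot,\psi,\nabla_H\psi)$, since $F[(1+t)\psi]$-type scaling is not exact for the quadratic-in-$p$ nonlinearity) combines with the gradient-increment error $C|\nabla_H\psi||\nabla_H g|$ into exactly the left-hand side of \eqref{Eq:Lcond2}, including getting the sign and the coefficient of the $\theta\Lambda|\nabla_p L|$ term right after applying Cauchy–Schwarz to $|\nabla_H g|$. A secondary technical point is making sure the noncommutator terms in $\nabla^2_{H,s}|z|^2$ and $\nabla^2_{H,s}(e^{\alpha|z|^2})$ — which on $\mathbb H^n$ genuinely differ from the Euclidean ones — are still dominated by the positive $\alpha^2 e^{\alpha|z|^2}\nabla_H|z|^2\otimes\nabla_H|z|^2$ term, which holds because that term is rank-one but grows like $\alpha^2$ while the correction is only $O(\alpha)$ and bounded, and because $\nabla^2_{H,s}|z|^2 \geq 2 I_{2n} - C I_{2n}$ is still bounded below; I would only need $\alpha$ large relative to fixed geometric constants of $\Omega$.
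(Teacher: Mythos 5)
Your overall skeleton is the same as the paper's (perturb by $\mu(e^{\alpha|z|^2}+e^{-\beta\psi}-\tau)$, compute the horizontal Hessians, use \eqref{Eq:Lcond1} for the $s$-increment, invoke \eqref{Eq:Lcond2} for the gradient part), but the way you propose to control the gradient increment has a genuine gap. If you estimate $L(\cdot,\psi,\nabla_H\tilde\psi_{\mu,\tau})-L(\cdot,\psi,\nabla_H\psi)$ by the Lipschitz bound $C|\nabla_H\psi|\,|\nabla_H g|\,I_{2n}$, the piece of $\nabla_H g$ coming from $e^{-\beta\psi}$ produces the error $-C\mu\beta e^{-\beta\psi}|\nabla_H\psi|^2 I_{2n}$. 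This is a \emph{full-rank} matrix term growing quadratically in $|\nabla_H\psi|$, and your claimed Cauchy--Schwarz split into ``bounded $I_{2n}$ plus small $\nabla_H\psi\otimes\nabla_H\psi$'' is impossible: $|\nabla_H\psi|^2 I_{2n}$ cannot be dominated by the rank-one term $\mu\beta^2e^{-\beta\psi}\nabla_H\psi\otimes\nabla_H\psi$, nor by $\frac{1}{C}|p|^m I_{2n}$ (only $m>1$, not $m\ge 2$, is assumed), nor by any bounded multiple of $I_{2n}$, no matter how large $\alpha$ is. The paper never lets this term appear: it compares $g(t)=L(\xi,\psi,\nabla_H\tilde\psi_t)/(1-tf'(\psi))$ along $t\in[0,\mu]$, so that the only quadratic-in-gradient quantity arising is $\nabla_H\tilde\psi_t\cdot\nabla_pL-L$, exactly the combination that \eqref{Eq:Lcond2} bounds by a rank-one $Cp\otimes p$ minus the favorable $\frac1C|p|^m I_{2n}$. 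Your remark about ``grouping the $-\mu L$ scaling deficit with the gradient errors'' points in that direction, but as written your one-shot Lipschitz estimate destroys precisely the structure \eqref{Eq:Lcond2} is designed to exploit.

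The parameter scheme is also off, and not merely bookkeeping. In the \eqref{Eq:Lcond2} step the cross term $\mu\,\nabla_pL\cdot\nabla_H(e^{\alpha|z|^2})$ must be absorbed by $\mu f'(\psi)\theta\Lambda|\nabla_pL|I_{2n}$, which forces $\theta\Lambda\approx C\alpha e^{\alpha|z|^2}/(\beta e^{-\beta\psi})$, \emph{independent of} $\mu$; taking ``$\theta$ of order $\mu$'' (with $\Lambda$ fixed) leaves the error unabsorbed, since both sides carry exactly one factor of $\mu$. Moreover $\theta$ must not exceed the $\bar\theta$ furnished by \eqref{Eq:Lcond2}, which is only achievable by choosing $\beta$ first and then $\alpha$ \emph{small} (the paper's constraint $\alpha\sup\varphi[1/f'(\psi)+1]\le 1/C$, leading to $\theta=\alpha\varphi/(8f'(\psi))$); your order ``$\alpha$ large first, then $\beta$ large'' makes the required $\theta$ of size roughly $e^{\beta M}/\beta$, which cannot be pushed below $\bar\theta$. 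Nothing in the argument needs $\alpha$ large: the positive term $2\mu\alpha e^{\alpha|z|^2}I_{2n}$ only has to beat error terms that are themselves proportional to $\alpha e^{\alpha|z|^2}$, and the unbounded-in-gradient errors are handled by \eqref{Eq:Lcond2} and by $\beta$, not by $\alpha$. (A minor point: $\nabla^2_{H,s}|z|^2=2I_{2n}$ exactly, since $|z|^2$ is $t$-independent, so there is no noncommutator correction to worry about; also, on $\Omega^{M,\delta}$ one only needs the one-sided bound $e^{\alpha|z|^2}+e^{-\beta\psi}-\tau\ge-\delta$ together with monotonicity of $L$ in $s$, not smallness of the perturbation, and the normalization $\mu_0\beta\sup e^{-\beta\psi}\le\frac12$ serves to keep $1-\mu\beta e^{-\beta\psi}\ge\frac12$, not to control the $s$-increment.)
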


\begin{proof} In the proof, $C$ will denote some large positive constant which may become larger as one moves from lines to lines but depends only on an upper bound for $M$, $L$ and $\Omega$. Eventually, we will choose large $\beta = \beta(C) > 0$, small $\alpha = \alpha(\beta, M, C) > 0$, and finally small $\mu_0 = \mu_0(\alpha, \beta,M, C) > 0$.

We set $\varphi(\xi) =\varphi(z,t)= e^{\alpha|z|^2}$, $f(\psi) = -  e^{-\beta\psi}$ and abbreviate  $\tilde\psi_\mu = \tilde\psi_{\mu,\tau} = \psi + \mu\,(\varphi - f(\psi) - \tau)$. Note that $f'(\psi) > 0$.

We assume in the sequel that $\alpha < 1$, $\delta < 1$ and
\begin{align}
\mu_0 \sup_\Omega [1 + f'(\psi)] &\leq \frac{1}{C} < \frac{1}{2}.
	\label{Eq:mu0Req1}
\end{align}

The following computation is done at a point in $\Omega^{M,\delta}$. We have
\begin{align*}
F[\tilde\psi_\mu]
	&\geq (1 - \mu\,f'(\psi)) F[\psi] - \mu\,f''(\psi) \nabla_{H}\psi \otimes \nabla_{H}\psi
		+ 2\mu \alpha\,\varphi\, I_{2n}
		\nonumber\\
		&\qquad\qquad + L(\xi,\tilde\psi_\mu,\nabla_{H}\tilde\psi_\mu) - (1 - \mu\,f'(\psi))  L(\xi,\psi,\nabla_{H}\psi).
\end{align*}
Noting that $\varphi - f(\psi) - \tau \geq -\delta$ in $\Omega^{M,\delta}$, we deduce from \eqref{Eq:Lcond1} that
\[
L(\xi,\tilde\psi_\mu,\nabla_{H}\tilde\psi_\mu) \geq L(\xi,\psi,\nabla_{H}\tilde\psi_\mu) - C\,\mu\,\delta\,(|\nabla_{H}\psi|^m + \mu^m\,\alpha^m\,\varphi^m)\,I_{2n}.
\]
Therefore,
\begin{align}
F[\tilde\psi_\mu]
	&\geq (1 - \mu\,f'(\psi)) F[\psi] - \mu\,f''(\psi) \nabla_{H}\psi \otimes \nabla_{H}\psi\nonumber\\
		&\qquad\qquad + 2\mu \alpha\,(1 - C\delta\mu^m\alpha^{m-1}\varphi^{m-1})\varphi\, I_{2n} - C\,\mu\,\delta|\nabla_{H}\psi|^m\,I_{2n}
		\nonumber\\
		&\qquad\qquad + L(\xi,\psi,\nabla_{H}\tilde\psi_\mu) - (1 - \mu\,f'(\psi))  L(\xi,\psi,\nabla_{H}\psi).
		\label{Eq:FtpsiEst1}
\end{align}

We proceed to estimate $L(\xi,\psi,\nabla_{H}\tilde\psi_\mu) - (1 - \mu\,f'(\psi))  L(\xi,\psi,\nabla_{H}\psi)$. For $0 \leq t \leq \mu$, let
\[
g(t) = \frac{L(\xi,\psi,\nabla_{H}\tilde\psi_t)}{1 - t f'(\psi)}. 
\]
We have
\begin{align*}
\frac{d}{dt} g(t)
	&\geq \frac{f'(\psi)}{(1 - t f'(\psi))^2} \Big[L(\xi,\psi,\nabla_{H}\tilde\psi_t) - \nabla_{H}\tilde\psi_t \cdot \nabla_p L(\xi,\psi,\nabla_{H}\tilde\psi_t)\\
		&\qquad\qquad - \frac{C\alpha \varphi}{f'(\psi)} |\nabla_p L(\xi,\psi,\nabla_{H}\tilde\psi_t)|\,I_{2n} \Big].
\end{align*}
Thus, in view of \eqref{Eq:Lcond2} and \eqref{Eq:mu0Req1}, if $\alpha, \beta$ and $\delta$ satisfy
\begin{align}
\alpha \sup_\Omega \varphi[\frac{1}{f'(\psi)} + 1]\,&\leq \frac{1}{C},\label{Eq:alphaReq}
\end{align}
then, with $R=M$, $\Lambda = 8C$ and $\theta = \frac{\alpha \varphi}{8f'(\psi)}$ in \eqref{Eq:Lcond2},
\begin{align*}
\frac{d}{dt} g(t)
	&\geq f'(\psi) \Big[-C \nabla_{H}\tilde\psi_t \otimes \nabla_{H}\tilde\psi_t + \frac{1}{C} |\nabla_{H}\tilde\psi_t|^m\,I_{2n}\Big] -  \frac{1}{2}\alpha\,\varphi \,I_{2n}\\
	&\geq f'(\psi) \Big[-C \nabla_{H}\psi \otimes \nabla_{H}\psi + \frac{1}{C} |\nabla_{H}\psi|^m\,I_{2n}\Big] -  \alpha\,\varphi \,I_{2n}.
\end{align*}
This implies
\begin{align}
&L(\xi,\psi,\nabla_{H} \tilde\psi_\mu) - (1 - \mu\,f'(\psi))  L(\xi,\psi,\nabla_{H}\psi)\nonumber\\
	&\qquad\qquad= (1 - \mu\,f'(\psi))[ g(\mu) - g(0)] \nonumber\\
	&\qquad\qquad\geq \mu\,f'(\psi) \Big[-C \nabla_{H}\psi \otimes \nabla_{H}\psi + \frac{1}{C} |\nabla_{H}\psi|^m\,I_{2n}\Big] - \mu\,\alpha\,\varphi\,I_{2n}.
	\label{Eq:FtpsiEst2}
\end{align}

Combining \eqref{Eq:FtpsiEst1} and \eqref{Eq:FtpsiEst2} and using \eqref{Eq:alphaReq}, we obtain
\begin{align}
F[\tilde\psi_\mu]
	&\geq (1 - \mu\,f'(\psi)) F[\psi]  +\frac{1}{2} \mu\, \alpha\,\varphi I_{2n}
		+  \frac{1}{C}\, \mu\,(f'(\psi) - C\delta)  |\nabla_{H}\psi|^m\,I_{2n}
		\nonumber\\
		&\qquad\qquad + \mu\,\big[-f''(\psi)  - Cf'(\psi)\big]\,\nabla_{H}\psi \otimes \nabla_{H}\psi 	.
		\label{Eq:FtpsiEst2Y}
\end{align}

We now fix $C$ and proceed with the choice of $\alpha, \beta, \delta$ and $\mu_0$. First, choosing $\beta \geq 2C$ and recalling the definition of $f$, we have
\[
-f''(\psi)  - Cf'(\psi) = \beta(\beta - C)e^{-\beta \psi} \geq \frac{1}{2}\beta\,f'(\psi).
\]
Next, choose $\alpha$ such that \eqref{Eq:alphaReq} is satisfied and choose $\delta$ such that $f'(\psi) - C\delta \geq \frac{1}{2}f'(\psi)$. Finally, choose $\mu_0$ such that \eqref{Eq:mu0Req1} holds. We hence obtain from \eqref{Eq:FtpsiEst2Y}  that
\[
F[\tilde\psi_{\mu}]
	\geq (1 - \mu\,f'(\psi)) F[\psi]
		+ \frac{1}{2} \mu  \,\alpha\,\varphi\, I_{2n} + \frac{1}{C}\, \mu\,f'(\psi)  |\nabla_{H}\psi|^m\,I_{2n} + \frac{1}{2}\beta\,\mu\,f'(\psi) \nabla_{H}\psi \otimes \nabla_{H}\psi.
\]
This completes the proof.
\end{proof}

\begin{lem}
Let $\Omega$ be an open bounded subset of $\HH^n$, $n \geq 1$, $L: \Omega \times \RR \times \RR^{2n} \rightarrow \mathcal{S}^{2n\times 2n}$ be a locally Lipschitz continuous function satisfying \eqref{Eq:Lcond1} and \eqref{Eq:Lcond2} for some $m > 1$, $F$ be given by \eqref{Eq:FDef}, and $\psi: \Omega \rightarrow \RR \cup \{\pm \infty\}$. For any $M > 0$, there exist positive constants $\mu_0, \alpha, \beta, \delta, K_0 > 0$, depending only on an upper bound of $M$, $L$ and $\Omega$, such that, for any $0 < \mu < \mu_0$, $\tau \in \RR$, the function $\hat\psi_{\mu,\tau} = \psi - \mu \,(e^{\alpha|z|^2} + e^{-\beta\psi} - \tau)$ satisfies
\[
F[\hat\psi_\mu] \leq (1 + \mu\,\beta\,e^{-\beta\psi})F[\psi]
  	- \mu\,K_0[(1 + |\nabla_{H}\psi|^m)\,I_{2n} + \nabla_{H}\psi \otimes \nabla_{H}\psi]
\]
in the set $\Omega^{M,\delta}$ defined by \eqref{Eq:OmegaMDef}.
\end{lem}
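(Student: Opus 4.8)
The statement is precisely the ``supersolution analogue'' of Lemma \ref{Lem:FVSub}, obtained by perturbing $\psi$ \emph{downward} instead of upward. The plan is to mimic the proof of Lemma \ref{Lem:FVSub} line by line, tracking the sign changes that occur because the perturbation term $\mu(e^{\alpha|z|^2}+e^{-\beta\psi}-\tau)$ is now subtracted. Concretely, I would set $\varphi(\xi)=e^{\alpha|z|^2}$, $f(\psi)=-e^{-\beta\psi}$ (so $f'(\psi)>0$ as before), and abbreviate $\hat\psi_\mu=\psi-\mu(\varphi-f(\psi)-\tau)$. The same smallness requirements on $\mu_0,\alpha,\delta$ as in \eqref{Eq:mu0Req1} and \eqref{Eq:alphaReq} will be imposed, and one first notes $\hat\psi_\mu = \psi - \mu(\varphi - f(\psi)-\tau)$ has Heisenberg gradient $\nabla_H\hat\psi_\mu = (1+\mu f'(\psi))\nabla_H\psi - 2\mu\alpha\varphi\, Jz$ (using the chain rule for the vector fields $X_j,Y_j$ applied to $|z|^2$), and symmetrized Heisenberg Hessian expanding analogously to the computation preceding \eqref{Eq:FtpsiEst1}.

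The first step is the Hessian expansion: a direct computation gives
\[
F[\hat\psi_\mu] \leq (1+\mu f'(\psi))F[\psi] + \mu f''(\psi)\nabla_H\psi\otimes\nabla_H\psi - 2\mu\alpha\varphi\,I_{2n} + L(\xi,\hat\psi_\mu,\nabla_H\hat\psi_\mu) - (1+\mu f'(\psi))L(\xi,\psi,\nabla_H\psi),
\]
where the inequality (rather than equality) comes from discarding a manifestly non-positive $O(\mu^2)$ term coming from the second derivatives of $\varphi$ and $e^{-\beta\psi}$, together with the term $-\mu\beta f'(\psi)e^{-\beta\psi}\nabla_H\psi\otimes\nabla_H\psi$ type contributions — exactly the mirror of \eqref{Eq:FtpsiEst1}. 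Since on $\Omega^{M,\delta}$ we have $\varphi - f(\psi)-\tau \geq -\delta$, hence $0 \geq \hat\psi_\mu - \psi \geq -C\mu\delta$... wait, more carefully: $\psi - \hat\psi_\mu = \mu(\varphi-f(\psi)-\tau)$, which may be large and positive, but what matters for invoking monotonicity \eqref{Eq:Lcond1} is that $\hat\psi_\mu \leq \psi$ (since $\varphi-f(\psi)-\tau$ could be negative); here one uses $\varphi - f(\psi) - \tau \geq -\delta$ so that $\hat\psi_\mu \geq \psi - \mu\delta \cdot(\text{something})$... The correct move, paralleling the original, is: on $\Omega^{M,\delta}$ the quantity $\varphi-f(\psi)-\tau$ lies in $[-\delta, C]$, so $\hat\psi_\mu \in [\psi - C\mu, \psi + \mu\delta]$, and \eqref{Eq:Lcond1} applied with the roles of $s,s'$ chosen according to sign gives $L(\xi,\hat\psi_\mu,\nabla_H\hat\psi_\mu) \leq L(\xi,\psi,\nabla_H\hat\psi_\mu) + C\mu(|\nabla_H\psi|^m + \mu^m\alpha^m\varphi^m)I_{2n}$, which is the needed one-sided bound since we want an \emph{upper} bound on $F[\hat\psi_\mu]$.

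The second step is to control $L(\xi,\psi,\nabla_H\hat\psi_\mu) - (1+\mu f'(\psi))L(\xi,\psi,\nabla_H\psi)$ by integrating $\frac{d}{dt}h(t)$ where $h(t) = \frac{L(\xi,\psi,\nabla_H\hat\psi_t)}{1+tf'(\psi)}$ for $0\leq t\leq \mu$, and here one invokes condition \eqref{Eq:Lcond2} (not \eqref{Eq:Lcond2Cat}) with the appropriate choice $\theta = \frac{\alpha\varphi}{8 f'(\psi)}$, $\Lambda = 8C$, $R=M$, obtaining $\frac{d}{dt}h(t) \leq f'(\psi)[C\nabla_H\psi\otimes\nabla_H\psi - \frac{1}{C}|\nabla_H\psi|^m I_{2n}] + \alpha\varphi\,I_{2n}$ after using \eqref{Eq:mu0Req1} and \eqref{Eq:alphaReq} to absorb the $\nabla_H\hat\psi_t$ versus $\nabla_H\psi$ discrepancy — this is the exact mirror of \eqref{Eq:FtpsiEst2}, with all inequality signs flipped. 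Combining everything as in the passage to \eqref{Eq:FtpsiEst2Y} yields
\[
F[\hat\psi_\mu] \leq (1+\mu f'(\psi))F[\psi] - \tfrac{1}{2}\mu\alpha\varphi\,I_{2n} - \tfrac{1}{C}\mu(f'(\psi)-C\delta)|\nabla_H\psi|^m I_{2n} + \mu[f''(\psi)+Cf'(\psi)]\nabla_H\psi\otimes\nabla_H\psi,
\]
and then the same three-step choice ($\beta\geq 2C$ so that $f''(\psi)+Cf'(\psi) = \beta(C-\beta)e^{-\beta\psi} \leq -\frac{1}{2}\beta f'(\psi)$; then $\alpha$ satisfying \eqref{Eq:alphaReq}; then $\delta$ with $f'(\psi)-C\delta\geq\frac{1}{2}f'(\psi)$; then $\mu_0$ per \eqref{Eq:mu0Req1}) produces the claimed inequality with $K_0 = \min(\frac{1}{2}\alpha\inf_\Omega\varphi, \frac{1}{C}\inf_\Omega f'(\psi), \frac{1}{2}\beta\inf_\Omega f'(\psi))$ and with $1+\mu\beta e^{-\beta\psi}$ on the right (noting $f'(\psi)=\beta e^{-\beta\psi}$). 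I do not anticipate a genuine obstacle here — the whole point is that the argument is structurally identical to that of Lemma \ref{Lem:FVSub}; the only place demanding care is bookkeeping the sign of each term so that every discarded quantity has the right sign for an \emph{upper} bound and so that \eqref{Eq:Lcond2}, rather than its ``cat'' variant, is the condition being used. Accordingly I would simply write ``The proof is entirely analogous to that of Lemma \ref{Lem:FVSub}, with the obvious sign changes,'' and record the final displayed inequality, leaving the routine verification to the reader.
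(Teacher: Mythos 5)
Your proposal is correct and matches the paper exactly: the paper's own proof of this lemma is simply ``The proof is similar to that of Lemma \ref{Lem:FVSub} and is omitted,'' and your sign-flipped mirroring of that argument (using \eqref{Eq:Lcond1}, \eqref{Eq:Lcond2} with the same choices of $\beta$, $\alpha$, $\delta$, $\mu_0$) is precisely the intended verification. The only slips are cosmetic — e.g.\ the horizontal gradient of $e^{\alpha|z|^2}$ contributes a multiple of $z$, not $Jz$, and a $\delta$ factor briefly dropped in one intermediate display — and they do not affect the argument.
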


\begin{proof} The proof is similar to that of Lemma \ref{Lem:FVSub} and is omitted.
\end{proof}

%+++++++++++++++++%

\subsection{Proof of Theorem \ref{thm:CPNUE}}

Arguing by contradiction, we suppose that there exists $\gamma>0$ such that 
\begin{equation*}
\max\limits_{\bar\Omega}(v-w) = 0\quad  \text{ and }\quad (v-w)(\xi)\leq-\gamma,\quad\forall~\xi\in\overline{\Omega\setminus \Omega_{\gamma}}\label{lpl}
\end{equation*} 
where $\Omega_\gamma = \{\xi \in \Omega: \textrm{dist}_{H}(\xi, \partial\Omega) :=\inf\limits_{\eta\in\partial\Omega}|\eta^{-1}\circ\xi|_{H}> \gamma\}$. 

For $\epsilon > 0$, let $v^\epsilon$ and $w_\epsilon$ be the $\epsilon$-upper and $\epsilon$-lower envelops of $v$ and $w$ respectively (see Section \ref{Sec:Prelim}). We note that
\[
v \leq v^\epsilon \leq \max_{\bar\Omega} v < +\infty \text{ and } w \geq w_\epsilon \geq \min_{\bar\Omega} w > -\infty. 
\]

In the sequel, we use $C$ to denote some positive constant which depends on $\max_{\bar\Omega} v$, $\min_{\bar\Omega} w$, $L$ and $\Omega$ but is always independent of $\epsilon$.

By Lemma \ref{Lem:FVSub}, we can find $\bar \mu > 0$, $\delta > 0$ and a smooth positive function $f: \RR^{2n+1} \times \RR \rightarrow (0,\infty)$, depending only on $\max_{\bar\Omega} v$, $\min_{\bar\Omega} w$, $L$ and $\Omega$, such that $f$ is decreasing with respect to the $s$-variable, $\bar\mu \sup_\Omega |\partial_s f(\cdot, v^\epsilon)| \leq\frac{1}{2}$ and, for $\mu \in (0,\bar\mu)$, $\tau \in \RR$ and $\tilde v_{\epsilon,\tau} = v^\epsilon + \mu (f(\cdot,v^\epsilon) - \tau)$, there holds
\begin{equation}
F[\tilde v_{\epsilon,\tau}] \geq (1 - \mu|\partial_s f(\cdot, v^\epsilon)|)F[v^\epsilon]
  	+ \frac{\mu}{C}(1 + |\nabla_{H}v^\epsilon|^m)\,I_{2n}
			\label{Eq:Ftw}
\end{equation}
in the set
\begin{multline*}
\tilde\Omega_{\epsilon} := \Big\{\xi \in \Omega_{\gamma/2}: \text{$v^\epsilon$ is punctually second order differentiable at $\xi$},\\
	v^\epsilon(\xi) \geq \min_{\bar\Omega} w- 1 \text{ and } f(\xi,v^\epsilon(\xi)) - \tau \geq -\delta \Big\}.
\end{multline*}
Note that $\bar\mu$ and $\delta$ are independent of $\epsilon$. Furthermore, in view of \eqref{Eq:UpLowConv}, there exists $\bar \eta > 0$ independent of $\epsilon$ such that, for all small $\epsilon$ and $\eta \in (0,\bar\eta)$, one can (uniquely) find $\tau = \tau(\epsilon,\eta)$ such that the function $\zeta_{\epsilon,\eta} := \tilde v_{\epsilon,\tau} - w_\epsilon$ satisfies
\[
\max_{\bar\Omega} \zeta_{\epsilon,\eta} = \eta \text{ and } \zeta_{\epsilon,\eta} < -\frac{\gamma}{2} \text{ on }\overline{\Omega\setminus \Omega_{\gamma}}.
\]

Let $\Gamma_{\zeta_{\epsilon,\eta}^{+}}$ denote the concave envelope of $\zeta_{\epsilon,\eta}^{+}:=\max\{\zeta_{\epsilon,\eta},0\}$ on $\bar\Omega$. Then by \eqref{utr}, we have 
\begin{equation*}
\nabla^{2}\zeta_{\epsilon,\eta}\geq-\frac{C}{\epsilon}I_{2n+1}\quad\mbox{ a.e. in }\Omega_{\gamma}.
\end{equation*}
By \cite[Lemma 3.5]{CabreCaffBook}, we have 
\begin{equation*}
\int_{\{\zeta_{\epsilon,\eta}=\Gamma_{\zeta_{\epsilon,\eta}^{+}}\}}\mbox{det}(-\nabla^{2}\Gamma_{\zeta_{\epsilon,\eta}^{+}})>0,
\end{equation*}
which implies that the Lebesgue measure of $\{\zeta_{\epsilon,\eta}=\Gamma_{\zeta_{\epsilon,\eta}^{+}}\}$ is positive. Then there exists $\xi_{\epsilon,\eta}\in\{\zeta_{\epsilon,\eta}=\Gamma_{\zeta_{\epsilon,\eta}^{+}}\}\cap\Omega_{\gamma}$ such that both of $v^\epsilon$ and $w_\epsilon$ are punctually second order differentiable at $\xi_{\epsilon,\eta}$, 
\begin{equation}
0<\zeta_{\epsilon,\eta}(\xi_{\epsilon,\eta})\leq\eta,\label{Eq:29Dec16b}
\end{equation}
\begin{equation}
|\nabla\zeta_{\epsilon,\eta}(\xi_{\epsilon,\eta})| = |\nabla \tilde v_{\epsilon,\tau}(\xi_{\epsilon,\eta})- \nabla w_{\epsilon}(\xi_{\epsilon,\eta})| \leq C\eta,\label{Eq:29Dec16c}
\end{equation}
and
\begin{equation}
\nabla^{2}\zeta_{\epsilon,\eta}(\xi_{\epsilon,\eta})=\nabla^2 \tilde v_{\epsilon,\tau}(\xi_{\epsilon,\eta})- \nabla^2 w_{\epsilon}(\xi_{\epsilon,\eta})\leq 0.\label{Eq:29Dec162m}
\end{equation}

It follows from (\ref{Eq:29Dec16c}) and (\ref{Eq:29Dec162m}) that 
\begin{equation}
|\nabla_{H}\zeta_{\epsilon,\eta}(\xi_{\epsilon,\eta})| \leq C\eta,\label{Eq:29Dec16c'}
\end{equation}
and
\begin{equation}
\nabla^{2}_{H,s}\zeta_{\epsilon,\eta}(\xi_{\epsilon,\eta})=\nabla^2_{H,s}\tilde v_{\epsilon,\tau}(\xi_{\epsilon,\eta})- \nabla^2_{H,s}w_{\epsilon}(\xi_{\epsilon,\eta})\leq 0.\label{Eq:29Dec162m'}
\end{equation}

From \eqref{Eq:29Dec16b} and the definition of $\tilde v_{\epsilon,\tau}$, we have
\begin{equation}
	f(\xi_{\epsilon,\eta},v^\epsilon(\xi_{\epsilon,\eta})) - \tau 
	> \frac{1}{\mu}(w_\epsilon(\xi_{\epsilon,\eta}) - v^\epsilon(\xi_{\epsilon,\eta})).
	\label{Eq:f-tau->0}
\end{equation}
Note that, as $w \geq v$ in $\Omega$, Lemma \ref{lem:EquiSC} implies that
\[
\liminf_{\epsilon \rightarrow 0, \eta \rightarrow 0}[w_\epsilon(\xi_{\epsilon,\eta}) - v^\epsilon(\xi_{\epsilon,\eta})] \geq 0.
\]
Hence, by shrinking $\mu$ and $\bar\eta$ if necessary, we may assume for all small $\epsilon$ that
\[
f(\xi_{\epsilon,\eta},v^\epsilon(\xi_{\epsilon,\eta})) - \tau \geq -\delta, \qquad v^\epsilon(\xi_{\epsilon,\eta}) \geq \min_{\bar\Omega} w - 1, \quad \text{ and } w_\epsilon(\xi_{\epsilon,\eta}) \leq \max_{\bar\Omega} v + 1.
\]
We deduce that $\xi_{\epsilon,\eta} \in \tilde\Omega_{\epsilon,\delta}$ and thus obtain from \eqref{Eq:Ftw} that
\begin{equation}
F[\tilde v_{\epsilon,\tau}](\xi_{\epsilon,\eta}) \geq (1 - \mu|\partial_s f(\xi_{\epsilon,\eta}, v^\epsilon(\xi_{\epsilon,\eta}))|)F[v^\epsilon](\xi_{\epsilon,\eta})
  	+ \frac{\mu}{C}(1 + |\nabla_{H}v^\epsilon(\xi_{\epsilon,\eta})|^m)\,I_{2n}.
		\label{Eq:FtwInAction}
\end{equation}

Next, by (\ref{Eq:Lcond0}) and (\ref{Eq:Lcond1}), we have
\[
L(\xi_{\epsilon,\eta}, w_\epsilon(\xi_{\epsilon,\eta}),\nabla_{H}w_{\epsilon}(\xi_{\epsilon,\eta})) - L(\xi_{\epsilon,\eta}, \tilde v_{\epsilon,\tau}(\xi_{\epsilon,\eta}), \nabla_{H}\tilde v_{\epsilon}(\xi_{\epsilon,\eta}))
	\geq -C\eta(|\nabla_{H}v^{\epsilon}(\xi_{\epsilon,\eta})|^{m}+1)\,I_{2n}.
\]
This together with \eqref{Eq:29Dec162m} implies that
\begin{equation}
F[w_{\epsilon}](\xi_{\epsilon,\eta}) \geq F[\tilde v^{\epsilon,\tau}](\xi_{\epsilon,\eta}) - C\eta(|\nabla_{H}v^{\epsilon}(\xi_{\epsilon,\eta})|^{m}+1)\,I_{2n}.
	\label{Eq:CPFwtv>}
\end{equation}
Recalling \eqref{Eq:FtwInAction}, there holds
\begin{equation}
F[w_{\epsilon}](\xi_{\epsilon,\eta}) \geq (1 - \mu|\partial_s f(\xi_{\epsilon,\eta}, v^\epsilon(x_{\epsilon,\eta}))|)F[v^\epsilon](\xi_{\epsilon,\eta})
  	+ \frac{1}{C}(\mu-C\eta)(1 + |\nabla_{H}v^\epsilon(\xi_{\epsilon,\eta})|^m)\,I_{2n}.
		\label{Eq:Fweve>}
\end{equation}

We next claim that 
\begin{equation}
\liminf_{\epsilon \rightarrow 0, \eta \rightarrow 0} \frac{1}{\epsilon}\Big[|((\xi_{\epsilon,\eta})_*)^{-1} \circ \xi_{\epsilon,\eta}|_{H}^4 + |((\xi_{\epsilon,\eta})^*)^{-1} \circ\xi_{\epsilon,\eta}|^4_{H}\Big] \leq C\mu^2.
	\label{Eq:30Rep}
\end{equation}
Assuming this claim for now, we use Proposition \ref{key lemma} to find $a > 0$ independent of $\epsilon$ and $\eta$ such that one has, in $\Omega_\gamma$,
\begin{align}
F[w_\epsilon](\xi_{\epsilon,\eta}) - a( |\xi_{\epsilon,\eta}-(\xi_{\epsilon,\eta})_*|+ \frac{1}{\epsilon}|((\xi_{\epsilon,\eta})_*)^{-1}\circ\xi_{\epsilon,\eta}|_{H}^{4})\,|\nabla_{H}w_\epsilon(\xi_{\epsilon,\eta})|^m\,I_{2n}
	&\in \mathcal{S}^{2n\times 2n}\setminus \Sigma,
	\label{Eq:Fwe}\\
	F[v^\epsilon](\xi_{\epsilon,\eta}) +  a( |\xi_{\epsilon,\eta}-(\xi_{\epsilon,\eta})^*|+ \frac{1}{\epsilon}|((\xi_{\epsilon,\eta})^*)^{-1}\circ(\xi_{\epsilon,\eta})|_{H}^{4})\,|\nabla_{H}v^\epsilon(\xi_{\epsilon,\eta})|^m\,I_{2n}
	&\in \overline{\Sigma},
	\label{Eq:Fve}
\end{align}
where $(\xi_{\epsilon,\eta})_*$ and $(\xi_{\epsilon,\eta})^*$ are as in Section \ref{Sec:Prelim}. The relations \eqref{Eq:Fweve>}, \eqref{Eq:Fwe} and \eqref{Eq:Fve} amount to a contradiction for sufficiently small $\epsilon$, $\eta$ and $\mu$ thanks to \eqref{Eq:UCondPos} and \eqref{Eq:UCond*S}. Therefore, to conclude the proof it suffices to prove the claim \eqref{Eq:30Rep}.

Pick some $\eta(\epsilon)$ such that $\eta(\epsilon) \rightarrow 0$ as $\epsilon \rightarrow 0$. Pick a sequence $\epsilon_m \rightarrow 0$ such that, for $\xi_m := \xi_{\epsilon_m,\eta(\epsilon_m)}$, the sequence $\frac{1}{\epsilon_m}[((\xi_{m})^*)^{-1}\circ(\xi_{m})|_{H}^{4}+((\xi_{m})_*)^{-1}\circ(\xi_{m})|_{H}^{4}]$ converges to a limit which we will show to be no larger than $C\mu^2$. We will abbreviate $\tau_m := \tau(\epsilon_m, \eta(\epsilon_m))$, $v^m = v^{\epsilon_m}$, $w_m = w_{\epsilon_m}$. Without loss of generality, we may also assume that $\xi_m \rightarrow \xi_0 \in \Omega$, $f(\xi_m, v^m(\xi_m)) \rightarrow f_0$ and $\tau_m \rightarrow \tau_0$. 

As $\max_{\bar\Omega} \zeta_{\epsilon,\eta} = \eta$, we have in view of \eqref{Eq:UpLowConv} that
\begin{multline}
v(\xi_0) - w(\xi_0) + \mu(f(\xi_0, v(\xi_0)) -\tau_0)  \\
	 = \lim_{m \rightarrow \infty} \big\{v^{m}(\xi_0) - w_{m}(\xi_0) + \mu(f(\xi_0, v^{m}(\xi_0)) -\tau_m) \big\} \leq 0.
	 \label{Eq:vwmftLim}
\end{multline}
On the other hand, by \eqref{Eq:29Dec16b} and the fact that $f$ is decreasing in $s$, we have
\begin{align*}
f(\xi_0, \limsup_{m \rightarrow \infty} v^m(\xi_m))
	&\leq  f_0 
		= \lim_{m \rightarrow \infty} f(\xi_{m}, v^{m}(\xi_m))\\
	& \leq \limsup_{m \rightarrow \infty}  f(\xi_m, w_m(\xi_m) - \mu(f(\xi_m, v^m(\xi_m)) - \tau_m))\\
	& \leq   f(\xi_0, \liminf_{m \rightarrow \infty} w_m(\xi_m) - \mu(f_0 - \tau_0)),
\end{align*}
which implies, in view of Lemma \ref{lem:EquiSC} and the fact that $w \geq v$, that 
\[
f(\xi_0, w(\xi_0)) 
	\leq f(\xi_0, v(\xi_0)) 
	\leq f_0
	 \leq   f(\xi_0, w(\xi_0) - \mu(f_0 - \tau_0)),
\]
which further implies that
\[
0 \leq f_0 - f(\xi_0, v(\xi_0)) \leq C\mu.
\]
Together with \eqref{Eq:vwmftLim}, this implies that
\[
v(\xi_0) - w(\xi_0) + \mu(f_0 -\tau_0)  \leq C\mu^2.
\]

We are now ready to wrap up the argument. As $((\xi_\epsilon)^*)^{-1} \circ \xi_\epsilon \rightarrow 0$ and $((\xi_\epsilon)_*)^{-1} \circ \xi_\epsilon \rightarrow 0$ as $\epsilon \rightarrow 0$, we have $(\xi_m)_* \rightarrow \xi_0$ and $(\xi_m)^* \rightarrow \xi_0$. As $v$ is upper semi-continuous and $w$ is lower semi-continuous, we have
\[
\limsup_{m \rightarrow \infty} v((\xi_m)^*) \leq v(\xi_0) \text{ and } \liminf_{m \rightarrow \infty} w((\xi_m)_*) \geq w(\xi_0).
\]
Thus, by the left half of \eqref{Eq:29Dec16b},
\begin{align*}
0 
	&\leq \limsup_{m \rightarrow \infty} \frac{1}{\epsilon_m}[((\xi_{m})^*)^{-1}\circ(\xi_{m})|_{H}^{4}+((\xi_{m})_*)^{-1}\circ(\xi_{m})|_{H}^{4}]\\
	&\leq \limsup_{m \rightarrow \infty} \big\{v((\xi_{m})^*) - w((\xi_{m})_*) + \mu[f(\xi_{m},v^{\epsilon_m}(\xi_{m})) - \tau_{m}]\big\}\\
	&\leq v(\xi_0) - w(\xi_0) + \mu(f_0 - \tau_0) \leq C\mu^2.
\end{align*}
We have proved \eqref{Eq:30Rep}, and thus concluded the proof.
\hfill$\Box$

\section{Perron's method}\label{Sec:Perron}

We begin with the 
\begin{proof}[Proof of Theorem \ref{thm:Uniq}]
The conclusion is a direct consequence of Theorem \ref{thm:CPQuad} and Remark \ref{rem:SCP=>CP}.
\end{proof}

In the rest of this section, we prove Theorem \ref{thm:Perron}. We introduce some notations. For $O\subset \HH^n$,  $h: O\to [-\infty, +\infty]$,
let
$$
h^*(\xi):=\lim_{r\to 0^+}
\sup \{h(\eta)\ |\ \eta\in O, |\eta-\xi|<r\},
$$
$$
h_*(\xi):=\lim_{r\to 0^+}
\inf \{h(\eta)\ |\ \eta\in O, |\eta-\xi|<r\}.
$$
It is easy to see that, if $h^*(\xi) < +\infty$ for all $\xi \in O$, then $h^* \in USC(O)$. Likewise, if $h_*(\xi) > -\infty$ for all $\xi \in O$, then $h_* \in LSC(O)$.

$h^*$ is called the upper semicontinuous envelope of $h$,
it is the smallest upper semicontinuous
function satisfying $h\le h^*$ in $O$.
Similarly,
$h_*$ is called the lower semicontinuous envelope of $h$, it is the
largest
lower semicontinuous function satisfying $h\ge h_*$ in $O$.

Note that, for any constant $c$, $F[c] = 0 \in \partial \Sigma$. Thus, replacing $v$ by $\max(v,c)$ with some $c < \inf_{\partial\Omega} w$ and $w$ by $\min(w,c')$ with some $c' > \sup_{\partial\Omega} v$ if necessary, we can assume that
\[
-\infty < \inf_{\bar\Omega} v \leq \sup_{\bar\Omega} w < +\infty.
\]
Here we have used the fact that the maximum of two subsolutions is a subsolution and the minimum of two supersolutions is a supersolution.

Note that by hypotheses, $w\ge v$ in $\Omega$. Define
\begin{eqnarray}
u(\xi):=
&&
\inf\{h(\xi)\ |\
v\le h\le w\ \mbox{in}\ \overline \Omega,
h=v=w\ \mbox{on}\ \partial \Omega,\nonumber \\
&& \qquad \qquad h\in LSC(\overline \Omega),\
h\ \mbox{is a supersolution of \eqref{Eq:FpsiEq} in}\ \Omega\}.
\label{2new}
\end{eqnarray}

Clearly
$$
\inf_{ \overline \Omega} u
\ge \inf_{ \overline \Omega} v>-\infty.
$$

We will prove that the above defined $u$ satisfies the requirement of Theorem \ref{thm:Perron}.

\begin{lem}\label{lemC5-1new}
Let $O\subset \HH^n$ be an open set, $L: O \times \RR \times \RR^{2n} \rightarrow \mathcal{S}^{2n \times 2n}$ be continuous, $F$ be given by \eqref{Eq:FDef}, and
let ${\cal F}$ be a family of supersolutions of
\eqref{Eq:FpsiEq} in $O$.
Let
$$
g(\xi):=\inf\{h(\xi)\ |\
h\in {\cal F}\},
\ \ \ \xi\in O.
$$
Assume that $g_*(\xi)>-\infty\ \forall\ \xi\in O$.
Then $g_*$ is a supersolution of \eqref{Eq:FpsiEq} in $O$.
\end{lem}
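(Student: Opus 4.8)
The statement is the standard fact that an infimum of viscosity supersolutions, lower-semicontinuously regularized, is again a viscosity supersolution; the plan is to adapt the classical argument (as in \cite{Ishii89-CPAM}) to the subelliptic Heisenberg setting. Fix $\xi_0 \in O$ and a test function $\varphi \in C^2(O)$ with $(g_* - \varphi)(\xi_0) = 0$ and $g_* - \varphi \geq 0$ near $\xi_0$; we must show $F[\varphi](\xi_0) \in \mathcal{S}^{2n\times 2n}\setminus\Sigma$. The first step is a reduction: by subtracting a small multiple of $|\eta^{-1}\circ\xi_0|_H^4$ (or an analogous smooth ``bump'' that is zero with zero gradient and Hessian at $\xi_0$ and strictly positive nearby) from $\varphi$, one may assume that $g_* - \varphi$ has a \emph{strict} local minimum at $\xi_0$, equal to $0$, on a small closed ball $\overline{D_r(\xi_0)} \subset O$; since $\nabla_H$ and $\nabla^2_{H,s}$ of the bump vanish at $\xi_0$, this does not change $F[\varphi](\xi_0)$, and by \eqref{Eq:UCondPos} it suffices to treat the perturbed test function.

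The core step is to produce, from the definition of $g_*$, a sequence realizing the infimum near $\xi_0$. Choose $\xi_j \to \xi_0$ with $g(\xi_j) \to g_*(\xi_0)$, and for each $j$ choose $h_j \in \mathcal{F}$ with $h_j(\xi_j) \leq g(\xi_j) + 1/j$. Let $\eta_j \in \overline{D_r(\xi_0)}$ be a point where $h_j - \varphi$ attains its minimum over the compact set $\overline{D_r(\xi_0)}$; since $h_j \geq g \geq g_*$, this minimum is $\leq (h_j - \varphi)(\xi_j) = h_j(\xi_j) - \varphi(\xi_j) \to g_*(\xi_0) - \varphi(\xi_0) = 0$. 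Passing to a subsequence, $\eta_j \to \bar\eta \in \overline{D_r(\xi_0)}$. The usual liminf estimate gives
\[
(g_* - \varphi)(\bar\eta) \leq \liminf_{j\to\infty} (h_j - \varphi)(\eta_j) \leq \limsup_{j\to\infty} (h_j - \varphi)(\eta_j) \leq 0,
\]
but $g_* - \varphi \geq 0$ near $\xi_0$ with strict minimum $0$ at $\xi_0$, so $\bar\eta = \xi_0$ and $(h_j - \varphi)(\eta_j) \to 0$. Now for $j$ large, $\eta_j$ is an interior point of $D_r(\xi_0)$, and $h_j - [\varphi + (h_j-\varphi)(\eta_j)]$ has a local minimum $0$ at $\eta_j$; the constant shift does not affect $F$, so since $h_j$ is a supersolution we get $F[\varphi](\eta_j) \in \mathcal{S}^{2n\times 2n}\setminus\Sigma$. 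Finally, $\eta_j \to \xi_0$ and $\varphi \in C^2$ imply $\nabla_H\varphi(\eta_j) \to \nabla_H\varphi(\xi_0)$ and $\nabla^2_{H,s}\varphi(\eta_j) \to \nabla^2_{H,s}\varphi(\xi_0)$, and $L$ is continuous, so $F[\varphi](\eta_j) \to F[\varphi](\xi_0)$; since $\mathcal{S}^{2n\times 2n}\setminus\Sigma$ is closed ($\Sigma$ open), $F[\varphi](\xi_0) \in \mathcal{S}^{2n\times 2n}\setminus\Sigma$, as desired.

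The only genuinely delicate point is the first-step reduction to a strict minimum in the non-Euclidean group setting: one needs a $C^2$ function, vanishing to second order at $\xi_0$ and strictly positive on a punctured neighborhood, to subtract. The function $\eta \mapsto |\xi_0^{-1}\circ\eta|_H^4$ is smooth away from $\xi_0$ but only $C^2$ (not $C^\infty$) at $\xi_0$; this is enough, and in any case one may instead use the Euclidean $|\eta - \xi_0|^4$ (in the underlying $\RR^{2n+1}$ coordinates), whose Euclidean gradient and Hessian vanish at $\xi_0$, hence so do $\nabla_H$ and $\nabla^2_{H,s}$ of it — this keeps the argument entirely elementary. Everything else is the routine Ishii–Perron bookkeeping, using only that $\Sigma$ is open (so its complement is closed, giving the final limit passage) and \eqref{Eq:UCondPos} (to absorb the bump perturbation). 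No structural assumption on $L$ beyond continuity is needed.
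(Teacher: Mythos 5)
Your proposal is correct and follows essentially the same route as the paper: it is Ishii's classical argument --- perturb the test function so the minimum at $\xi_0$ becomes strict, extract touching points $\eta_j\to\xi_0$ for members $h_j$ of the family, invoke the supersolution property of $h_j$ at $\eta_j$, and pass to the limit using continuity of $L$ and closedness of $\mathcal{S}^{2n\times 2n}\setminus\Sigma$. The paper does this with a quadratic penalization $P_\delta=P-\delta|\eta-\xi|^2$ plus a constant shift $\beta_i$, and removes both in a final double limit $i\to\infty$, $\delta\to 0$; your quartic bump, whose Heisenberg gradient and symmetrized Hessian vanish at $\xi_0$, eliminates the extra $\delta$-limit, which is a mild simplification rather than a different method.

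One assertion is wrong as written: since $F[\psi]=\nabla^2_{H,s}\psi+L(\cdot,\psi,\nabla_{H}\psi)$ and $L$ depends on the $s$-variable, the constant shift $c_j=(h_j-\varphi)(\eta_j)$ \emph{does} affect $F$; what the supersolution property of $h_j$ gives at $\eta_j$ is $F[\varphi+c_j](\eta_j)\in\mathcal{S}^{2n\times 2n}\setminus\Sigma$, not $F[\varphi](\eta_j)\in\mathcal{S}^{2n\times 2n}\setminus\Sigma$ (the paper keeps this shift explicitly as $\beta_i$ and disposes of it only in the final limit). The slip is harmless in your argument because you have already shown $c_j\to 0$, so continuity of $L$, $\varphi\in C^2$ and $\eta_j\to\xi_0$ still give $F[\varphi+c_j](\eta_j)\to F[\varphi](\xi_0)$, but the step should be stated this way. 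Two minor remarks: $|\xi_0^{-1}\circ\eta|_H^4$ is a polynomial in the coordinates of $\eta$, hence $C^\infty$ (not merely $C^2$), so either bump is fine; and \eqref{Eq:UCondPos} is not actually needed to absorb the bump, since the bump contributes nothing to $F$ at $\xi_0$.
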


\begin{proof}
Suppose for some $\xi\in O$ that there exists a polynomial $P$ of the form
$$
P(\eta):=a+p\cdot (\eta-\xi)+\frac 12 (\eta-\xi)^t M(\eta-\xi),
$$
with $a \in \RR$, $p\in \RR^{2n+1}$, $M\in {\cal S}^{(2n+1)\times (2n+1)}$, such that,
for some $\epsilon>0$,
\begin{equation}
P(\xi)=g_*(\xi) \text{ and } P(\eta)\le g_*(\eta)\ \ \forall\ |\eta-\xi|<\epsilon.
\label{C6-1new}
\end{equation}
We will show that
\begin{equation}
F[P](\xi)\in {\cal S}^{2n\times 2n} \setminus  \Sigma.
\label{C6-2new}
\end{equation}
It is standard that this implies that $g_*$ is a supersolution of \eqref{Eq:FpsiEq} in the sense of Definition \ref{Def:ViscositySolution}.

By the definition of $g_*$, there exists
$r_i\to 0^+$, $|\xi^{(i)}-\xi|<r_i$ such that
$$
\inf_{B_{r_i}(\xi)} g \leq g(\xi^{(i)}) \leq \inf_{B_{r_i}(\xi)} g + \frac{1}{i} \le g_*(\xi)  + \frac{1}{i} \text{ and }
 g(\xi^{(i)})\to g_*(\xi).
$$
Moreover,
 there exists
$h_i\in {\cal F}$, such that
$h_i\ge g \ge g_*$ and
$$
0\le h_i(\xi^{(i)})-g(\xi^{(i)})<\frac 1i.
$$
We see from the above that
$$
h_i\ge g\ge g_*\ge
P \quad \mbox{in}\ B_\epsilon(\xi),
$$
and
$$
h_i(\xi^{(i)})\to g_*(\xi)=P(\xi).
$$

For every $0<2\delta<\min\{\epsilon,
dist(x, \partial O)\}$,
consider
$$
P_\delta(\eta):= P(\eta)-\delta|\eta-\xi|^2.
$$
Then
$$
h_i\ge P_\delta\ \  \mbox{in}\ B_\epsilon(\xi),
\quad 
h_i\ge P_\delta+ \delta^3\ \  \mbox{in}\ B_\epsilon(\xi)\setminus
B_\delta(\xi), \text{ and }
h_i(\xi^{(i)})-P_\delta(\xi^{(i)})
\to 0.
$$

It follows that
 there exists $\beta_i=\circ(1) \ge 0$ and $\xi^{(i)*}\in B_\delta(\xi)$ such that
\begin{equation}\label{C7-0new}
h_i(\eta)\ge P_\delta(\eta) + \beta_i,\ \ \mbox{in}\ B_\epsilon(\xi),
 \qquad
h_i(\xi^{(i)*})=P_\delta(\xi^{(i)*}) + \beta_i.
\end{equation}
As $h_i$ is also a supersolution of \eqref{Eq:FpsiEq} in $O$.  Thus,
\begin{equation}
F[P_\delta + \beta_i](\xi^{(i)*})
\in {\cal S}^{2n\times 2n}\setminus 
 \Sigma.
\label{C7-1new}
\end{equation}

\noindent{\bf Claim.}\  $\xi^{(i)*}\to \xi$.

\medskip

Indeed, after passing to a subsequence,
$\xi^{(i)*}\to \bar \xi$,  for some $\bar \xi$ satisfying
$|\bar \xi-\xi|\le \delta.$
By \eqref{C7-0new} and the definition of $g$ and $g_*$,
$$
g_*(\xi^{(i)*}) - \beta_i \le h_i(\xi^{(i)*}) - \beta_i=P_\delta(\xi^{(i)*}).
$$
Sending $i$ to infinity in the above,
and using the lower-semicontinuity property of $g_{*}$,
we have
$
g_*(\bar \xi) \le P_\delta(\bar \xi)=P(\bar \xi)-\delta|\bar \xi-\xi|^2.
$
On the other hand, $P(\bar \xi)\le g_*(\bar \xi)$ according to
\eqref{C6-1new}.
Thus $\bar \xi=\xi$, and the claim is proved.

\medskip

With the convergence of $\xi^{(i)*}$ to $\xi$ and of $\beta_i$ to $0$, sending $\delta$ to $0$ and $i$ to $\infty$ in
\eqref{C7-1new} yields
\eqref{C6-2new}.
Lemma \ref{lemC5-1new} is established.
\end{proof}

\bigskip

\begin{proof}[Proof of Theorem \ref{thm:Perron}]
We know that
\begin{equation}
\max(v,u_*)\le u\le \min(u^*, w),
\qquad\mbox{in}\ \overline \Omega,
\label{C9-1new}
\end{equation}
where $u$ is defined by \eqref{2new}.  Clearly,
\begin{equation}
v= u_*= u=
u^*= w,
\qquad\mbox{on}\ \partial \Omega,
\label{C9-2new}
\end{equation}
By Lemma \ref{lemC5-1new},
$u_*$ is a supersolution of \eqref{Eq:FpsiEq} in $\Omega$. By the comparison principle Theorem \ref{thm:CPQuad}~(b), $u_* \geq v$. Hence, by the definition of $u$, $u\le u_*$ 
in $\overline \Omega$.
Thus $u=u_*$ in $\overline \Omega$, and $u$ is a supersolution of \eqref{Eq:FpsiEq} 
in $\Omega$.

Note that
\[
\sup_{\bar\Omega} u^* \leq \sup_{\bar\Omega} w < +\infty.
\]
\medskip

\noindent{\bf Claim.}\
$u^*$ is a subsolution of \eqref{Eq:FpsiEq}  in $\Omega$.

\medskip

To prove this claim, we follow Ishii's argument (\cite{Ishii89-CPAM}). Indeed, if the claim does not hold, there exist $\xi\in \Omega$ and some quadratic polynomial
$$
P(\eta)=a+p\cdot (\eta-\xi)+\frac 12 (\eta-\xi)^t M (\eta-\xi),
$$
with $a \in \RR$, $p\in \RR^{2n+1}$, $M\in {\cal S}^{(2n+1)\times (2n+1)}$, such that
for some $\bar \epsilon>0$ 
\begin{equation}
P(\eta)\ge u^*(\eta)\ \ \mbox{for}\ \eta\in
 B_{\bar\epsilon}(\xi),\qquad
P(\xi)=u^*(\xi),
\label{C10-1new}
\end{equation}
but
\begin{equation}
F[P](\xi)\in {\cal S}^{ 2n\times 2n}\setminus \overline \Sigma.
\label{C10-2new}
\end{equation}

Since ${\cal S}^{ 2n\times 2n}\setminus \overline \Sigma$ 
is open, there exists $0<2\bar\delta<
\min\{\bar \epsilon^2,  1\}$ such that
for all $0<\delta<\bar\delta$, the function
$$
P_\delta(\eta):=P(\eta)+\delta|\eta-\xi|^2 -\delta^2
$$
satisfies
\begin{equation}
P_\delta(\xi)=P(\xi)-\delta^2<u^*(\xi),
\label{C11-0new}
\end{equation}
and
\begin{equation}
F[P_\delta](\eta)\in {\cal S}^{ 2n\times 2n}\setminus \overline  \Sigma,\qquad
\forall\ |\eta-\xi|<\delta^{1/9}.
\label{C10-3new}
\end{equation}
Clearly,
\begin{equation}
P_\delta(\eta)>P(\eta),\qquad
\forall \ |\eta-\xi|\ge  \delta^{1/5}.
\label{C11-1new}
\end{equation}

Define
$$
\hat u(\eta):=
\left\{
\begin{array}{lr}
\displaystyle{
\min\{u(\eta), P_\delta(\eta)\},
}&
\mbox{if}\ |\eta-\xi|<\delta^{1/5},\\
u(\eta), &
\mbox{if}\ |\eta-\xi|\ge \delta^{1/5}.
\end{array}
\right.
$$
By \eqref{C10-3new},
$P_\delta$ is a supersolution of \eqref{Eq:FpsiEq}  in $\{\eta: |\eta-\xi|<\delta^{1/9}\}$.
By \eqref{C11-1new}, and using $P\ge u^*\ge u$, we have
$$
\hat u(\eta)=u(\eta)=
\min\{u(\eta), P_\delta(\eta)\},\qquad
\delta^{1/5}\le
|\eta-\xi|\le \delta^{1/6}.
$$
It follows that $\hat u$, being the minimum
 of two supersolutions,
 is a supersolution of \eqref{Eq:FpsiEq}  in $\Omega$ ( see proposition A.2 in \cite{WangBo2016}), and,
because of the definition of $u$,
\begin{equation}
u\le \hat u\qquad\mbox{in}\ \Omega.
\label{C12-1new}
\end{equation}
On the other hand we see from
  \eqref{C11-0new},   the definition of $\hat u$ and \eqref{C12-1new}
that there exists $\epsilon\in (0, \delta^{1/5})$ such that
$$
u(\eta)\le
\hat u(\eta)\le P_\delta(\eta)<u^*(\xi)-\epsilon,\qquad
\forall\ |\eta-\xi|<\epsilon.
$$
Thus
$$
u^*(\xi)
=\lim_{r\to 0^+}
\sup\{u(\eta)\ |\ |\eta-\xi|<r\}
\le u^*(\xi)-\epsilon,
$$
a contradition.
The claim is proved, i.e. $u^*$ is a subsolution of \eqref{Eq:FpsiEq}  in $\Omega$. 

\bigskip

Now we have proved that $u_* = u$ and $u^*$ are respectively
supersolution and subsolution of \eqref{Eq:FpsiEq}  in $\Omega$, and $u_*=u^*$
on $\partial \Omega$. By the comparison principle Theorem \ref{thm:CPQuad}~(b), $u^* \leq u$ in $\Omega$ and so $u = u_* = u^*$ is a solution of \eqref{Eq:FpsiEq}.
\end{proof}

\noindent{\bf{\large Acknowledgments.}} Li is partially supported by NSF grant DMS-1501004. Wang is partially supported by NNSF (11701027) and Beijing Institute of Technology Research Fund Program for Young Scholars. 

%\bibliography{paris}{}
%\bibliographystyle{siam}

\newcommand{\noopsort}[1]{}

\end{document}